\documentclass[11pt]{article} 


\usepackage{amsthm}

\usepackage{bbm}

\usepackage{algorithmic}
\usepackage[linesnumbered, ruled,noend]{algorithm2e}

\pdfminorversion=4

\usepackage{amssymb,latexsym,amsmath}
\usepackage{mathrsfs}
\usepackage{epsfig}
\usepackage{caption}

\usepackage{dsfont}
\usepackage{color}
\usepackage{epsfig}
\usepackage{caption}

\newtheorem{theorem}{\bf{Theorem}}[section]

       \newtheorem{corollary}{\bf{Corollary}}[section]

       \newtheorem{proposition}{\bf{Proposition}}[section]
   
   \newtheorem{assumption}{\bf{Assumption}}[section]

       \newtheorem{definition}{\bf{Definition}}[section]
       \newtheorem{remark}{\bf{Remark}}[section]

\newcommand{\uu}{\mathbb{U}}

\newcommand{\yy}{\mathbb{Y}}

\renewcommand{\P}{\mathcal{P}}




\newcommand{\uniform}{\text{Unif}}


\begin{document}


\title{Q-Learning for Stochastic Control under General Information Structures and Non-Markovian Environments}


\author{Ali D. Kara and Serdar Y\"uksel
\thanks{Ali D. Kara is with the Department of Mathematics, University of Michigan, Ann Arbor, USA, Email: alikara@umich.edu. S. Y\"uksel is with the Department of Mathematics and
    Statistics, Queen's University, Kingston, Ontario, Canada, K7L
    3N6.  Email: yuksel@queensu.ca. This research was
    partially supported by the Natural Sciences and Engineering
   Research Council of Canada (NSERC).}
}

\maketitle

\begin{abstract}
As our primary contribution, we present a convergence theorem for stochastic iterations, and in particular, Q-learning iterates, under a general, possibly non-Markovian, stochastic environment. Our conditions for convergence involve an ergodicity and a positivity criterion. We provide a precise characterization on the limit of the iterates and conditions on the environment and initializations for convergence. As our second contribution, we discuss the implications and applications of this theorem to a variety of stochastic control problems with non-Markovian environments involving (i) quantized approximations of fully observed Markov Decision Processes (MDPs) with continuous spaces (where quantization break down the Markovian structure), (ii) quantized approximations of belief-MDP reduced partially observable MDPS (POMDPs) with weak Feller continuity and a mild version of filter stability (which requires the knowledge of the model by the controller), (iii) finite window approximations of POMDPs under a uniform controlled filter stability (which does not require the knowledge of the model), and (iv) for multi-agent models where convergence of learning dynamics to a new class of equilibria, subjective Q-learning equilibria, will be studied. In addition to the convergence theorem, some implications of the theorem above are new to the literature and others are interpreted as applications of the convergence theorem. Some open problems are noted. 
\end{abstract}

\section{Introduction}

In some stochastic control problems, one does not know the true dynamics or the cost structure, and may wish to use past data to obtain an asymptotically optimal solution (that is, via {\it learning} from past data). In some problems, this may be used as a numerical method to obtain approximately optimal solutions. 

Yet, in many problems including most of those in health, applied and social sciences and financial mathematics, one may not even know whether the problem studied can be formulated as a fully observed Markov Decision Process (MDP), or a partially observable Markov Decision Process (POMDP) or a multi-agent system where other agents are present or not. There are many practical settings where one indeed works with data and does not know the possibly very complex structure under which the data is generated and tries to respond to the environment. 

A common practical and reasonable response is to view the system as an MDP, with a perceived state and action (which may or may not define a genuine controlled Markov chain and therefore, the MDP assumption may not hold in actuality), and arrive at corresponding solutions via some learning algorithm. 

The question then becomes two-fold: (i) Does such an algorithm converge? (ii) If it does, what does the limit mean for each of the following models: MDPs, POMDPs, and multi-agent models? 

 We answer the two questions noted above in the paper: 

The answer to the first question will follow from a general convergence result, stated  in Theorem \ref{main_thm} below. The result will require an ergodicity condition and a positivity condition, which will be specified and will need to be ensured depending on the specifics of the problem in various forms and initialization conditions. While our approach builds on \cite{kara2021convergence}, the generality considered in this paper requires us to precisely present conditions on ergodicity and positivity, which will be important in applications.

The second question will entail further regularity and assumptions depending on the particular (hidden) information structure considered, whose implications under several practical and common settings will be presented. 

Some of these have not been reported elsewhere and some will build on prior work though with a unified lens. We will first study fully observed MDPs with continuous spaces, then POMDPs with continuous spaces, and finally decentralized stochastic (or multi-agent) control problems.

 We first show that under weak continuity conditions and a technical ergodicity condition, Q-learning can be applied to fully observable MDPs for near optimal performance under discounted cost criteria. 

For POMDPs, we show that under a uniform controlled filter stability, finite memory policies can be used to arrive at near optimality, and with only asymptotic filter stability quantization can be used to arrive at near optimality, under a mild unique ergodicity condition (which entails the mild asymptotic filter stability condition) and weak Feller continuity of the non-linear filter dynamics. We note that the quantized approximations, for both MDPs and belief-MDPs, raise mathematical questions on unique ergodicity and the initialization, which are also addressed in the paper.

For decentralized stochastic control problems and multi-agent systems, under a variety of information structures with strictly local information or partial global information, we show that Q-learning can be used to arrive at equilibria, even though this may be a subjective one (i.e., one which may depend on subjective modeling or probabilistic assumptions of each agent).

We thus study reinforcement learning in stochastic control under a variety of models and information structures. The general theme is that reinforcement learning can be applied to a large class of models under a variety of performance criteria, provided that certain regularity conditions apply for the associated kernels. 

 We note that similar studies have been studied in the literature starting with \cite{singh1994learning}, and including the recent studies \cite{chandak2022reinforcement} and \cite{dong2022simple}, to be discussed further below.

{\bf Contributions.}

\begin{itemize}
\item The main contribution of the paper is Theorem \ref{main_thm}, where we prove a general convergence result for a class of, possibly non-Markovian, stochastic iterations applicable to a large class of scenarios. 

\item In Section \ref{apps}, we provide several applications and implications of Theorem \ref{main_thm}. In particular, we show that Theorem \ref{main_thm} can be used to explain the convergence behavior observed in Q learning of several non-Markovian environments. 
\begin{itemize} 
\item[(i)] We note that the proof of Theorem \ref{main_thm}, when applied to the standard finite model MDP setup, also offers an alternative to the standard martingale approach used to prove the convergence of Q learning (\cite{jaakkola1994convergence, TsitsiklisQLearning}; see also \cite{Szepesvari,BertsekasTsitsiklis,meyn2022control} for a general review). In particular, we do not require a separate proof for the boundedness of the iterates to establish the convergence result.
\item[(ii)] Different versions Q learning under space discretization, and Q learning for POMDPs with finite memory information variables, as well as for multi agent systems have been shown to converge in past under some, more restrictive, ergodicity assumptions. We show that Theorem \ref{main_thm} collectively explains these convergence results and also allows for further generalizations and relaxations: For example, in the context of Theorem \ref{thmFiniteMemory} and Corollary \ref{finMemCor}, Theorem \ref{main_thm} allows us to relax the positive Harris recurrence assumption in \cite[Assumption 3]{kara2021convergence} to only unique ergodicity which is a much more general condition especially for applications where the state process is uncountable.
\item[(iii)] In Section \ref{belief_quant}, we show that the application of Q learning for POMDPs with quantized (probability measure valued) belief states will converge under suitable assumptions. Note that this result is different from the use finite memory history variables (utilized in Theorem \ref{thmFiniteMemory} and Corollary \ref{finMemCor}) and is thus a new result and as well as application to the literature. The complementary convergence conditions, building on the verification of Theorem \ref{main_thm}, are stated in the paper with a detailed comparison noted in Remark \ref{comparisonRemark}. 
\item[(iv)] The convergence result and its broad applicability raises an open question on the existence of equilibria where multiple agents learn their optimal policies through subjectively updating their local approximate Q functions.
\end{itemize}

\end{itemize}


 

\subsection{Convergence Notions for Probability Measures}
For the analysis of the technical results, we will use different convergence and distance notions for probability measures.

Two important notions of convergences for sequences of probability measures are weak convergence, and convergence under total variation. For some $N\in\mathbb{N}$ a sequence $\{\mu_n,n\in\mathbb{N}\}$ in $\mathcal{P}(\mathbb{X})$ is said to converge to $\mu\in\mathcal{P}(\mathbb{X})$ \emph{weakly} if $\int_{\mathbb{X}}c(x)\mu_n(dx) \to \int_{\mathbb{X}}c(x)\mu(dx)$ for every continuous and bounded $c:\mathbb{X} \to \mathbb{R}$.
One important property of weak convergence is that the space of probability measures on a complete, separable, and metric (Polish) space endowed with the topology of weak convergence is itself complete, separable, and metric \cite{Par67}. One such metric is the bounded Lipschitz metric  \cite[p.109]{villani2008optimal}, which is defined for $\mu,\nu \in \P(\mathbb{X})$ as 
\begin{equation}\label{BLmetric}
\rho_{BL}(\mu,\nu):=\sup_{\|f\|_{BL}\leq1} | \int f d\mu - \int f d\nu | 
\end{equation}
where \[ \|f\|_{BL}:=\|f\|_\infty+\sup_{x\neq y}\frac{|f(x)-f(y)|}{d(x,y)} \]
and $\|f\|_\infty=\sup_{x\in\mathbb{X}}|f(x)|$.

We next introduce the first order Wasserstein metric. The \emph{Wasserstein metric} of order 1 for two distributions $\mu,\nu\in\mathcal{P}(\mathbb{X})$ is defined as
\begin{align*}
  W_1(\mu,\nu) =\inf_{\eta \in \mathcal{H}(\mu,\nu)} \int_{\mathbb{X}\times\mathbb{X}} \eta(dx,dy) |x-y|,
\end{align*}
where $\mathcal{H}(\mu,\nu)$ denotes the set of probability measures on $\mathbb{X}\times\mathbb{X}$ with first marginal $\mu$ and second marginal $\nu$. Furthermore, using the dual representation of the first order Wasserstein metric, we equivalently have
\begin{align*}
W_1(\mu,\nu)=\sup_{Lip(f)\leq 1}\left|\int f(x)\mu(dx)-\int f(x)\nu(dx)\right|
\end{align*}
where $Lip(f)$ is the minimal Lipschitz constant of $f$.

A sequence $\{\mu_n\}$ is said to converge in $W_1$ to $\mu \in \mathcal{P}(\mathbb{R}^N)$ if $W_1(\mu_n,\mu)  \to 0$. For compact $\mathbb{X}$, the Wasserstein distance of order $1$ metrizes the weak topology on the set of probability measures on $\mathbb{X}$ (see \cite[Theorem 6.9]{villani2008optimal}). For non-compact $\mathbb{X}$ convergence in the $W_1$ metric implies weak convergence (in particular this metric bounds from above the Bounded-Lipschitz metric \cite[p.109]{villani2008optimal}, which metrizes the weak convergence).

  For probability measures $\mu,\nu \in \mathcal{P}(\mathbb{X})$, the \emph{total variation} metric is given by
  \begin{align*}
    \|\mu-\nu\|_{TV}&=2\sup_{B\in\mathcal{B}(\mathbb{X})}|\mu(B)-\nu(B)|=\sup_{f:\|f\|_\infty \leq 1}\left|\int f(x)\mu(dx)-\int f(x)\nu(dx)\right|,
  \end{align*}
  \noindent where the supremum is taken over all measurable real $f$ such that $\|f\|_\infty=\sup_{x\in\mathbb{X}}|f(x)|\leq 1$. A sequence $\mu_n$ is said to converge in total variation to $\mu \in \mathcal{P}(\mathbb{X})$ if $\|\mu_n-\mu\|_{TV}\to 0$.

\section{A Q-Learning Convergence Theorem under Non-Markovian Environments}

Q-learning under non-Markovian settings have been studied recently in a few publications. Prior to such recent studies, we note that \cite{singh1994learning} showed the convergence of Q-learning for POMDPs with measurements viewed as state variables under certain conditions involving unique ergodicity of the hidden state process under the exploration. However, what the results mean was not studied. 

Recently, \cite{kara2021convergence} showed the convergence of Q-learning with finite window measurements and showed near optimality of the resulting limit under filter stability conditions. In the following, we will adapt the proof method in \cite{kara2021convergence} to the setup where the environment is an ergodic process but also generalize the class of problems for which the limit of the iterates can be shown to imply near optimality (for POMDPS) or near-equilibrium (for stochastic games).

The convergence result will serve complement to two highly related recent studies: \cite{dong2022simple} and \cite{chandak2022reinforcement}, but with both different contexts and interpretations as well as mathematical analysis. 

\cite{dong2022simple} presents a general paradigm of reinforcement learning under complex environments where an agent responds with the environment. A regret framework is considered, where the regret comparison is with regard to policies from a possibly suboptimal collection of policies. The variables are assumed to be finite, even though an infinite past dependence is allowed. The distortion measure for approximation is a uniform error among all past histories which are compatible with the presumed state. A uniform convergence result for the convergence of time averages is implicitly imposed in the paper. \cite{chandak2022reinforcement} considers the convergence of Q-learning under non-Markovian environments where the cost function structure is aligned with the paradigm in \cite{dong2022simple}. The setup in \cite{chandak2022reinforcement} assumes that the realized cost is a measurable function of the assumed finite state, a finite-space valued observable realization and the finite action; furthermore a continuity and measurability condition for the infinite dimensional observable process history is imposed which may be restrictive given the infinite dimensional history process and subtleties involved for such conditioning operations, e.g. in the theory of non-linear filtering processes \cite{chigansky2010complete}. \cite{chandak2022reinforcement} pursues an ODE method for the convergence analysis (which was pioneered in \cite{borkar2000ode}).

Regarding the comment on the infinite past dependence, the approximations in both \cite{dong2022simple} and \cite{chandak2022reinforcement} require a worst case error in a sample-path sense, which, for example, is too restrictive for POMDPs, as we have studied in \cite{kara2021convergence} and \cite{kara2020near}, in view of the term $L_t$ defined below in (\ref{definitionLFilterS}). Notably, for such problems, filter stability only provides guarantees in expectation and when sample path results are presented, these typically only involve asymptotic merging and not uniform merging.

In our setup, there is an underlying model and the true incurred costs admit exogenous uncertainty which impacts the incurred costs and the considered hidden or observable random variables may be uncountable space valued. We adopt the general and concise stochastic proof method presented in \cite{kara2021convergence} (and \cite{KSYContQLearning}), tailored to ergodic non-Markovian dynamics, to allow for convergence to a limit. The generality of our setup requires us to precisely present conditions for convergence. 

Let $\{C_t\}_{t}$ be $\mathbb{R}$-valued, $\{S_t\}_t$ be $\mathbb{S}$-valued and $\{U_t\}_{t}$ be $\mathbb{U}$-valued three stochastic processes. Consider the following iteration defined for each $(s,u)\in\mathbb{S}\times\mathbb{U}$ pair
\begin{align}\label{iterateAlgM}
Q_{t+1}(s,u)=\left(1-\alpha_t(s,u)\right)Q_t(s,u) + \alpha_t(s,u)\left(C_t + \beta V_t(S_{t+1}) \right)
\end{align}
where $V_t(s)=\min_{u\in\mathbb{U}} Q_t(s,u)$, and $\alpha_t(s,u)$ is a sequence of constants also called the learning rates. We assume that the process $U_t$ is selected so that the following conditions hold. An umbrella sufficient condition is the following:
\begin{assumption}\label{erg_assmp}
$\mathbb{S},\mathbb{U}$ are finite sets, and the joint process $(S_{t+1},S_t,U_t,C_t)$ is asymptotically ergodic in the sense that {for the given initialization random variable $S_0$}, for any measurable function $f$, we have that with probability one,
\begin{align*}
\frac{1}{N}\sum_{t=0}^{N-1} f(S_{t+1},S_t,U_t,C_t)\to \int f(s_1,s,u,c)\pi(ds_1,ds,du,dc)
\end{align*}
for some measure $\pi$ such that $\pi(\mathbb{S}\times s\times u\times\mathbb{R})>0$ for any $(s,u)\in \mathbb{S}\times \mathbb{U}$.
\end{assumption}

The above implies Assumption \ref{main_assmp}(ii)-(iii) below:
\begin{assumption}\label{main_assmp}
\begin{itemize}
\item[i.] $\alpha_t(s,u)=0$ unless $(S_t,U_t)=(s,u)$. Furthermore,
\begin{align*}
\alpha_t(s,u)=\frac{1}{1+\sum_{k=0}^t1_{\{S_k=s,U_k=u\}}}
\end{align*}
and with probability $1$,
$\sum_t \alpha_t(s,u) = \infty$
\item[ii.] For $C_t$, we have
\begin{align*}
\frac{\sum_{k=0}^t C_k 1_{\{S_k=s,U_k=u\}}}{\sum_{k=0}^t  1_{\{S_k=s,U_k=u\}}}\to C^*(s,u),
\end{align*}
almost surely for some $C^*$.
\item[iii.] For the $S_t$ process, we have, for any function $f$,
\begin{align*}
\frac{\sum_{k=0}^t f(S_{k+1}) 1_{\{S_k=s,U_k=u\}}}{\sum_{k=0}^t  1_{\{S_k=s,U_k=u\}}}\to \int f(s_1)P^*(ds_1|s,u)
\end{align*}
almost surely for some $P^*$.
\end{itemize}
\end{assumption}

Note that a stationary assumption is not required. Under Assumption \ref{erg_assmp}, we have that with $f(S_{t+1},S_t,U_t,C_t)=C_t 1_{\{S_t=s,U_t=u\}}$
\begin{align*}
\frac{1}{N}\sum_{t=0}^{N-1} C_t 1_{\{S_t=s,U_t=u\}}\to \int_{C\in\mathbb{R}} C \pi(S=s,U=u,dC).
\end{align*}
We also have that with $f(S_{t+1},S_1,U_t,C_t)=1_{\{S_t=s,U_t=u\}}$
\begin{align*}
\frac{1}{N}\sum_{t=0}^{N-1} 1_{\{S_t=s,U_t=u\}}\to  \pi(S=s,U=u)
\end{align*}
almost surely. 
Hence, we can write
\begin{align*}
\frac{\frac{1}{t+1}\sum_{k=0}^t C_k 1_{\{S_k=s,U_k=u\}}}{\frac{1}{t+1}\sum_{k=0}^t  1_{\{S_k=s,U_k=u\}}}\to \int C \pi(dC|S=s,U=u)=:C^*(s,u)
\end{align*}
which implies Assumption \ref{main_assmp} (ii). Similarly, one can also establish Assumption \ref{main_assmp} (iii) under Assumption \ref{erg_assmp}.

As before, let $\mathbb{S},\mathbb{U}$ be finite sets. Consider the following equation
\begin{align}
Q^*(s,u)= C^*(s,u) + \beta \sum_{s_1\in \mathbb{S}}V^*(s_1)P^*(s_1|s,u)\label{Q_fixed}
\end{align}
for some functions $Q^*$, $C^*$, to be defined explicitly, and for some regular conditional probability distribution $P^*(\cdot|s,u)$, where $V^*(u):=\min_u Q^*(s,u)$.

\begin{theorem}\label{main_thm}
Under Assumption \ref{main_assmp}, $Q_t(s,u)\to Q^*(s,u)$ almost surely for each $(s,u)\in\mathbb{S}\times\mathbb{U}$ pair where $Q^*$ satisfies (\ref{Q_fixed})  for any initialization of $Q_0$.
\end{theorem}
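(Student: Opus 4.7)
The first step is to recognize a closed-form representation of the iterate. Because $\alpha_t(s,u)=0$ off visits to $(s,u)$ and $\alpha_t(s,u)=1/(1+n_t(s,u))$ at the $n_t$-th visit, a short induction on the number of visits gives
\begin{align*}
Q_t(s,u) = \frac{Q_0(s,u) + \sum_{k<t}\bigl(C_k + \beta V_k(S_{k+1})\bigr)\,1_{\{S_k=s,\,U_k=u\}}}{1 + m_t(s,u)},
\end{align*}
where $m_t(s,u) := \sum_{k<t} 1_{\{S_k=s,\,U_k=u\}}$; the persistent excitation half of Assumption \ref{main_assmp}(i) guarantees $m_t(s,u)\to\infty$ almost surely. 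Since $\mathbb{S},\mathbb{U}$ are finite and $\beta\in(0,1)$, the Bellman-type operator $(TQ)(s,u):=C^*(s,u)+\beta\sum_{s_1}V_Q(s_1)P^*(s_1|s,u)$ with $V_Q(s)=\min_u Q(s,u)$ is a sup-norm $\beta$-contraction, so equation (\ref{Q_fixed}) admits a unique $Q^*$.

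Setting $\tilde Q_t:=Q_t-Q^*$ and using the fixed-point identity for $Q^*$, the representation decomposes as
\begin{align*}
\tilde Q_t(s,u) = \frac{Q_0(s,u)-Q^*(s,u)}{1+m_t(s,u)} + B_t(s,u) + E_t(s,u) + D_t(s,u),
\end{align*}
with $B_t$ the centered running average of the $C_k$'s at visits to $(s,u)$, $E_t = \beta$ times the centered running average of $V^*(S_{k+1})$'s at such visits, and
\begin{align*}
D_t(s,u) = \beta\cdot\frac{\sum_{k<t}\bigl(V_k(S_{k+1})-V^*(S_{k+1})\bigr)\,1_{\{S_k=s,\,U_k=u\}}}{1+m_t(s,u)}.
\end{align*}
Assumption \ref{main_assmp}(ii) makes $B_t\to 0$ a.s., Assumption \ref{main_assmp}(iii) applied with $f=V^*$ makes $E_t\to 0$ a.s., and the leading fraction trivially vanishes. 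All the difficulty sits in $D_t$, which is coupled back to the iterates themselves.

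Before attacking $D_t$ I would establish an a.s.\ bound on $\|Q_t\|_\infty$. Using $|V_k(S_{k+1})|\le\|Q_k\|_\infty$ and estimating the average of the $V_k$-terms by its maximum, the representation yields $\|Q_t\|_\infty \le \gamma_t + \beta\max_{k<t}\|Q_k\|_\infty$, where $\gamma_t$ involves $\|Q_0\|_\infty$ and the eventually convergent averages of Assumption \ref{main_assmp}(ii), hence is a.s.\ bounded by some random $\Gamma$. A standard induction on the running maximum, using $\beta<1$, then gives $\sup_t\|Q_t\|_\infty\le\max(\|Q_0\|_\infty,\Gamma/(1-\beta))<\infty$ a.s., so $\eta:=\limsup_t\|\tilde Q_t\|_\infty<\infty$ a.s.

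The bootstrap closes the argument. From $|D_t(s,u)|\le \beta\bigl(\sum_{k<t}\|\tilde Q_k\|_\infty\,1_{\{S_k=s,\,U_k=u\}}\bigr)/(1+m_t(s,u))$, fix $\epsilon>0$ and choose $T$ with $\|\tilde Q_k\|_\infty\le \eta+\epsilon$ for all $k\ge T$. Splitting the sum at $T$: the head is $O(T/m_t(s,u))=o(1)$ by boundedness of $\|\tilde Q_k\|_\infty$ together with $m_t(s,u)\to\infty$, while the tail contributes at most $\eta+\epsilon$. Combined with the vanishing of the other three terms, $\limsup_t\|\tilde Q_t\|_\infty\le\beta(\eta+\epsilon)$, i.e.\ $\eta\le\beta(\eta+\epsilon)$; since $\epsilon$ is arbitrary and $\beta<1$, $\eta=0$, so $Q_t\to Q^*$ a.s.\ for any $Q_0$. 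I expect this final bootstrap to be the main obstacle: boundedness, the three ergodic-remainder vanishings, and the contraction factor $\beta<1$ have to be put in place and coordinated, because $D_t$ is recursively tied to the iterate through the value function.
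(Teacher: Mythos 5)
Your proposal is correct, and its backbone coincides with the paper's: because of the specific learning rates in Assumption \ref{main_assmp}(i), the iterate is an exact running average over visit times, the ergodicity conditions (ii)--(iii) (the latter applied to the fixed function $V^*$) kill the centered cost and $V^*$ averages --- these are precisely the paper's $r^*_t$/$v_t$ terms --- and what remains is the value-function coupling term (your $D_t$, the paper's $\hat F_t+e_t$), which is handled through the contraction factor $\beta<1$. Where you genuinely diverge is in how the loop is closed. The paper never writes the closed form for the full iterate; it splits $\Delta_t=\delta_t+u_t+v_t$ into coupled recursions, shows $\|\delta_t+u_t\|_\infty$ decreases monotonically whenever it exceeds a threshold $C\epsilon$, is absorbed once below it, and must reach it via an iterated comparison with averaged sequences converging to $\hat\beta K_0$, $\hat\beta K_1,\dots$; this is how it avoids any separate boundedness argument (a point the paper advertises). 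You instead extract a.s.\ boundedness directly from the averaged representation by a short induction using $\beta<1$, define $\eta=\limsup_t\|Q_t-Q^*\|_\infty<\infty$, and close with the one-shot self-bounding inequality $\eta\le\beta(\eta+\epsilon)$, splitting $D_t$ at a path-dependent time $T$. Both finishes are valid; yours is more compact and replaces the paper's threshold-and-shrinking-levels bookkeeping with a single limsup bootstrap, at the modest cost of an explicit (though elementary, one-paragraph) boundedness lemma. You also make explicit the existence and uniqueness of $Q^*$ via the sup-norm contraction defining (\ref{Q_fixed}), which the paper leaves implicit. One presentational caution: in bounding $\gamma_t$ you should bound the signed averaged cost term $\bigl|\sum_{k<t}C_k 1_{\{S_k=s,U_k=u\}}\bigr|/(1+m_t)$ by its convergent limit from Assumption \ref{main_assmp}(ii) rather than invoking averages of $|C_k|$, but as you phrase it the argument goes through.
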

\begin{proof} We adapt the proof method presented in \cite[Theorem 4.1]{kara2021convergence}, where instead of positive Harris recurrence, we build on ergodicity. We first prove that the process $Q_t$, determined by the algorithm in (\ref{iterateAlgM}), converges almost surely to $Q^*$. We  define 
\begin{align*}
\Delta_t(s,u)&:=Q_t(s,u)-Q^*(s,u)\\
F_t(s,u)&:=C_t+\beta V_t(S_{t+1})-Q^*(s,u)\\
\hat{F}_t(s,u)&:=C^*(s,u)+\beta\sum_{s_1} V_t(s_1)P^*(s_1|s,u) -Q^*(s,u),
\end{align*}
where $V_t(s)=\min_uQ_t(s,u)$.

Then, we can write the following iteration
\begin{align*}
\Delta_{t+1}(s,u)=(1-\alpha_t(s,u))\Delta_t(s,u)+\alpha_t(s,u) F_t(s,u).
\end{align*}
Now, we write $\Delta_t=\delta_t+w_t$ such that 
\begin{align*}
\delta_{t+1}(s,u)&=(1-\alpha_t(s,u))\delta_t(s,u)+\alpha_t(s,u) \hat{F}_t(s,u)\\
w_{t+1}(s,u)&=(1-\alpha_t(s,u))w_t(s,u)+\alpha_t(s,u) r_t(s,u)
\end{align*}
where $r_t:=F_t-\hat{F}_t=\beta V_t(S_{t+1})-\beta \sum_{s_1}V_t(s_1)P^*(s_{1}|s,u) + C_t- C^*(s,u)$. Next, we define
\begin{align*}
r_t^*(s,u)=\beta V^*(S_{t+1})-\beta \sum_{s_1}V^*(s_1)P^*(s_1|s,u) + C_t-C^*(s,u)
\end{align*}
We further separate $w_t=u_t+v_t$ such that
\begin{align*}
u_{t+1}(s,u)&=(1-\alpha_t(s,u))u_t(s,u)+\alpha_t(s,u) e_t(s,u)\\
v_{t+1}(s,u)&=(1-\alpha_t(s,u))v_t(s,u)+\alpha_t(s,u) r^*_t(s,u)
\end{align*}
where $e_t=r_t-r^*_t$. 



We now show that $v_t(s,u)\to 0$ almost surely for all $(s,u)$. We have
\begin{align*}
v_{t+1}(s,u)&=(1-\alpha_t(s,u))v_t(s,u)+\alpha_t(s,u) r^*_t(s,u).
\end{align*}
When the learning rates are chosen such that 
$\alpha_t(s,u)=0$ unless $(S_t,U_t)=(s,u)$, and,
\[\alpha_t(s,u) = {1 \over 1+ \sum_{k=0}^{t} \mathbbm{1}_{\{S_k=s, U_k=u\}} }\]
this term reduces to
\begin{align*}
v_{t+1}(s,u)=\frac{\sum_{k=0}^{t} r^*_{k}(s,u) \mathbbm{1}_{\{S_k=s,,U_k=u\}} + v_0(s,u)}{1+\sum_{k=0}^{t} \mathbbm{1}_{\{S_k=s,U_k=u\}}}.
\end{align*}
Recall that
\begin{align*}
r_k^*(s,u)=\beta V^*(S_{k+1})-\beta \sum_{s_1}V^*(s_1)P^*(s_1|s,u) + C_k-C^*(s,u).
\end{align*}
Hence, it is a direct implication of Assumption \ref{main_assmp} that $v_t(s,u)\to 0$ almost surely for all $(s,u)$.

Now, we go back to the iterations:
\begin{align*}
\delta_{t+1}(s,u)&=(1-\alpha_t(s,u))\delta_t(s,u)+\alpha_t(s,u) \hat{F}_t(s,u)\\
u_{t+1}(s,u)&=(1-\alpha_t(s,u))u_t(s,u)+\alpha_t(s,u) e_t(s,u)\\
v_{t+1}(s,u)&=(1-\alpha_t(s,u))v_t(s,u)+\alpha_t(s,u) r^*_t(s,u).
\end{align*}
Note that, we want to show $\Delta_t=\delta_t+u_t+v_t \to 0$ almost surely and we have that $v_t(s,u)\to 0$ almost surely for all $(s,u)$. The following analysis holds for any path that belongs to the probability one event in which $v_t(s,u)\to 0$. For any such path and for any given $\epsilon>0$, we can find an $N<\infty$ such that $\|v_t\|_\infty<\epsilon$ for all $t>N$ as $(s,u)$ takes values from a finite set.

We now consider the term $\delta_t + u_t$ for $t>N$:
\begin{align}\label{sum_proc}
(\delta_{t+1}+u_{t+1})(s,u)&=(1-\alpha_t(s,u))(\delta_t+u_t)(s,u)+\alpha_t(s,u) (\hat{F}_t+e_t)(s,u).
\end{align}
Observe that  for $t>N$,
\begin{align*}
(\hat{F}_t+e_t)(s,u)=(F_t-r_t^*)(s,u)=\beta V_t(S_{t+1})-\beta V^*(S_{t+1})&\leq \beta\max_{s,u}|Q_t(s,u)-Q^*(s,u)|=\beta\|\Delta_t\|_\infty\\
&\leq \beta \|\delta_t+u_t\|_\infty+\beta \epsilon
\end{align*}
where the last step follows from the fact that $v_t\to 0$ almost surely. By choosing $C<\infty$ such that $\hat{\beta}:=\beta(C+1)/C<1$, for $\|\delta_t+u_t\|_\infty>C\epsilon$, we can write that
\begin{align*}
 \beta \|\delta_t+u_t+\epsilon\|_\infty\leq \hat{\beta}\|\delta_t+u_t\|_\infty.
\end{align*}
Now we rewrite (\ref{sum_proc})
\begin{align}
(\delta_{t+1}+u_{t+1})(s,u)&=(1-\alpha_t(s,u))(\delta_t+u_t)(s,u)+\alpha_t(s,u) (\hat{F}_t+e_t)(s,u) \nonumber \\
&\leq (1-\alpha_t(s,u))(\delta_t+u_t)(s,u)+\alpha_t(s,u)  \hat{\beta}\|\delta_t+u_t\|_\infty \label{QBound12} \\
&<\|\delta_t+u_t\|_\infty \nonumber
\end{align}

Hence $\max_{s,u}((\delta_{t+1}+u_{t+1})(s,u))$ monotonically decreases for $\|\delta_t+u_t\|_\infty>C\epsilon$ and hence there are two possibilities: it either gets below $C\epsilon$ or it never gets below $C \epsilon$ in which case by the monotone non-decreasing property it will converge to some number, say $M_1$ with $M_1 \geq C\epsilon$. 

First, we show that once the process hits below $C\epsilon$, it always stays there. Suppose $\|\delta_t+u_t\|_\infty<C\epsilon$,
\begin{align}
(\delta_{t+1}+u_{t+1})(s,u)&\leq(1-\alpha_t(s,u))(\delta_t+u_t)(s,u)+\alpha_t(s,u) \beta \left(\|\delta_t+u_t\|_\infty+\epsilon\right) \nonumber \\
&\leq (1-\alpha_t(s,u)) C\epsilon + \alpha_t(s,u) \beta (C\epsilon + \epsilon) \nonumber \\
&= (1-\alpha_t(s,u)) C\epsilon  + \alpha_t(s,u) \beta (C+1) \epsilon  \nonumber\\
&\leq  (1-\alpha_t(s,u)) C\epsilon  + \alpha_t(s,u) C\epsilon, \quad (\beta(C+1)\leq C)  \nonumber \\
&=C\epsilon.  \nonumber
\end{align}

To show that $M_1 \geq C\epsilon$ is not possible, we start  by (\ref{QBound12}), we have that for all $(s,u)$
\begin{align*}
\left|(\delta_{k+1}+u_{k+1})(s,u)\right|&\leq (1-\alpha_k(s,u))\left|(\delta_k+u_k)(s,u)\right|+\alpha_k(s,u)  \hat{\beta}\|\delta_k+u_k\|_\infty 
\end{align*}
Assume $\|\delta_k+u_k\|_\infty$ is bounded by some $K_0<\infty$ for all $k$, which we can always do since it is a decreasing sequence. One can then iteratively show that these iterations are bounded from above by the sequence solving the following dynamics
\begin{align*}
\left|\zeta_{k+1}(s,u)\right|&= (1-\alpha_k(s,u))\left|\zeta_k(s,u)\right|+\alpha_k(s,u)  \hat{\beta}K_0.
\end{align*}
Thus, $\zeta_k(s,u)$ will converge to the value $\hat{\beta}K_0$ for any initial point and starting from any time instance $k$. This follows since under the assumed learning rates, the iterates will converge to the averages of the constant $\hat{\beta}K_0$. Hence,  the sequence $\|\delta_k+u_k\|_\infty$ will eventually become smaller than $\hat{\beta}K_0 + \kappa_0$ for any arbitrarily small $\kappa_0 > 0$. Similarly, once the sequence is bounded by some $K_1:=\hat{\beta}K_0 + \kappa_1$ (where $\kappa_1 > 0$ is arbitrarily small), they will eventually get smaller than $\hat{\beta}K_1 +\kappa$ for any $\kappa>0$. Repeating the same argument, it follows that $\|\delta_k+u_k\|_\infty$ will hit below $C\epsilon$ eventually, in finite time.

This shows that the condition $\|\delta_t+u_t\|_\infty>C\epsilon$ cannot be sustained indefinitely for some fixed $C$ (independent of $\epsilon$). Hence, $(\delta_t+u_t)$ process converges to some value below $C\epsilon$ for any path that belongs to the probability one set. Then, we can write $\|\delta_t+u_t\|_\infty<C\epsilon$ for large enough $t$. Since $\epsilon > 0$ is arbitrary, taking $\epsilon \to 0$, we can conclude that $\Delta_t=\delta_t+u_t+v_t \to 0$ almost surely.

Therefore, the process $Q_t$, determined by the algorithm in (\ref{iterateAlgM}), converges almost surely to $Q^*$. 
\end{proof}

\subsection{An Example: Machine Replacement with non i.i.d. Noise} 
In this section we study the implications of the previous result on a machine replacement problem where the state process is not controlled Markov. 

In this model, we have $\mathbb{X,U,W}=\{0,1\}$ with
\begin{align*}
x_t=&\begin{cases}1 \quad \text{ machine is working at time t }\\
0  \quad \text{ machine is not working at time t }.\end{cases}
u_t=&\begin{cases}1 \quad \text{ machine is being repaired at time t }\\
0  \quad \text{ machine is not being repaired at time t }.\end{cases}
\end{align*}
We assume that the noise variable $w_t$ is not i.i.d. but is a Markov process with transition kernel 
\begin{align*}
Pr(w_{t+1}=0|w_t=0)=0.9,\quad Pr(w_{t+1}=0|w_t=1)=0.4
\end{align*}
Give the noise, we have the dynamics $x_{k+1} = f(x_k,u_k,w_k)$ for the controlled state process
\begin{align*}
&x_1=0 \text{ if } x=0, u=0, w=0,\qquad x_1=0 \text{ if } x=0, u=0, w=1\\
&x_1=1 \text{ if } x=0, u=1, w=0,\qquad x_1=0 \text{ if } x=0, u=1, w=1\\
&x_1=1 \text{ if } x=1, u=0, w=0,\qquad  x_1=0 \text{ if } x=1, u=0, w=1\\
&x_1=1 \text{ if } x=1, u=1, w=0, \qquad x_1=0 \text{ if } x=1, u=1, w=1
\end{align*}

In words, if the noise $w=1$ then the machine breaks down at the next time step independent of the repair or the state of the machine. If the noise is not present, i.e. $w=0$ then the machine is fixed if we decide to repair it, but stays broken if it was broken at the last step and we did not repair it.

The one stage cost function is given by
\begin{align*}
c(x,u)=&\begin{cases}R+E \quad  &x=0,u=1 \\
E  \quad  &x=0, u=0 \\
0 \quad &x=1,u=0\\
R \quad &x=1, u=1\end{cases}
\end{align*}
where $R$ is the cost of repair and 
$E$ is the cost incurred by a broken machine.

We study the example with discount factor $\beta=0.7$,  and $R=1, E=1.5$. For the exploration policy, we use a random policy such that $Pr(u_t=0)=\frac{1}{2}$ and $Pr(u_t=1)=\frac{1}{2}$ for all $t$. 

Note that the state process $x_t$ is no longer a controlled Markov chain. However, we can check that Assumption \ref{main_assmp} holds and that we can apply Theorem \ref{main_thm} to show that Q learning algorithm will converge. In particular, one can show that the joint process $\left(x_t,w_t\right)$ forms a Markov chain under the exploration policy, and admits a stationary distribution, say $\pi$ such that 
\begin{align*}
\pi(x=0,w=0)=0.145,\quad \pi(x=0,w=1)=0.127,\quad \pi(x=1,w=0)=0.654,\quad \pi(x=1,w=1)=0.0728.
\end{align*}
Hence, the Q learning algorithm constructed using $s_t=x_t$ will converge to the Q values of an MDP with the following transition probabilities:
\begin{align}
P^*(s_1|s,u)=\frac{\sum_w \mathbbm{1}_{\{s_1=f(s,u,w)\}} \pi(s,u,w)}{\sum_w \pi(s,u,w)}\label{trans_machine}
\end{align}
where $\pi(s,u,w)=\frac{1}{2}\pi(s,w)$ for the exploration policy we use.

In Figure \ref{fig1}, on the left we plot the learned value functions for the non-Markov state process when we take $s_t=x_t$. The plot on the right represents the learned value function when we simulate the environment as an MDP with the transition kernel $P^*$ given in (\ref{trans_machine}). One can see that they converge to the same values as expected from the theoretical arguments.
\begin{figure}[h]
\centering
\epsfig{figure=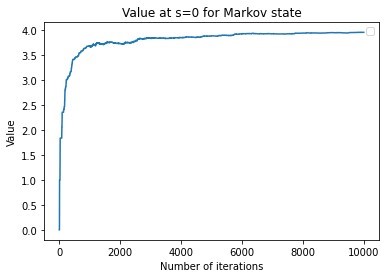,scale=0.6}\epsfig{figure=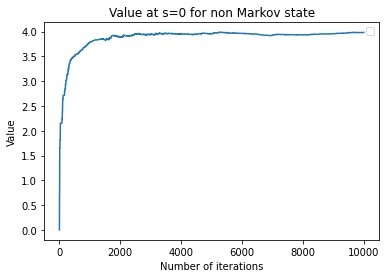,scale=0.6}\\
\caption{Q value convergence for non Markov and Markov state.}\label{fig1}
\end{figure}

A further note is that since the state process is not Markov, the learned policies are not optimal. In particular, the Q leaning algorithm with $s_t=x_t$ learns the policy
\begin{align*}
\gamma_1(1)=0, \quad \gamma_1(0)=1.
\end{align*}
Via simulation, the value of this policy can be found to be around $1.66$. 

However, one can also construct the Q learning with $s_t= (x_t,x_{t-1})$ ({which is still not a Markovian state, however}). The learned policy in this case is
\begin{align*}
\gamma_2(s)=\begin{cases}
1 \text{ if } s=(0,0)\\
0 \text{ o.w. }
\end{cases} 
\end{align*}
The value of this policy can be simulated to be around $1.59$, and thus performs better than the policy for the state variable $s_t=x_t$.

In the following, we will discuss a number of applications, together with conditions under which the limit of the iterates are near-optimal.

\section{Implications and Applications under Various Information Structures}\label{apps}

In this section, we study the implications and applications of the convergence result in Theorem \ref{main_thm}; some of the applications and refinements are new and some are from recent results viewed in a unified lens.

We first start with a brief review involving Markov Decision Processes. Consider the model
$x_{t+1}=f(x_t,u_t,w_t)$,
where $x_t$ is an $\mathbb{X}$-valued state variable, $u_t$ a $\mathbb{U}$-valued control action variable, $w_t$ a $\mathbb{W}$-valued i.i.d noise process, and $f$ a function, where $\mathbb{X}, \mathbb{U},\mathbb{W}$ are appropriate spaces, defined on some probability space $(\Omega, {\cal F}, P)$. By, e.g. \cite[Lemma 1.2]{gihman2012controlled}, the model above contains processes satisfying the following for all Borel $B$ and $t \geq 0$
\begin{eqnarray} \label{eq_evol1}
P( x_{t+1} \in B | x_{[0,t]}=a_{[0,t]}, u_{[0,t]}=b_{[0,t]}) = P( x_{t+1} \in B | x_t=a_t, u_t=b_t) =: {\cal T}(B | a_t, b_t)
\end{eqnarray}
where ${\cal T}(\cdot|x,u)$ is a {\it stochastic kernel} from $\mathbb{X} \times \mathbb{U}$ to $\mathbb{X}$. Here, $x_{[0,t]}:=\{x_0,x_1,\cdots,x_t\}$. A stochastic process which satisfies (\ref{eq_evol1}) is called a {\it controlled Markov chain}. Let the control actions $u_t$ be generated via a control policy $\gamma = \{\gamma_t, t \geq 0\}$ with $u_t = \gamma_t (I_t)$, where $I_t$ is the information available to the Decision Maker (DM) or controller at time $t$. If $I_t = \{x_0,\cdots,x_t; u_0,\cdots, u_{t-1}\}$, we have a {\it fully observed} system and an optimization problem is referred to as a {\it Markov Decision Process (MDP)}. As an optimization criterion, given a cost function $c: \mathbb{X} \times \mathbb{U} \to \mathbb{R}_+$, one may consider $J_{\beta}(x,\gamma) = E^{\gamma}_{x}[\sum_{t=0}^{\infty} \beta^t c(x_t,u_t)],$ for some $\beta \in (0,1)$ and $x_0=x$. This is called a {\it discounted infinite-horizon optimal control problem} \cite{Bertsekas}. 

If the DM has only access to noisy measurements $y_t= g(x_t,v_t)$, with $v_t$ being another i.i.d. noise, and $I_t = \{y_0,\cdots,y_t; u_0,\cdots, u_{t-1}\}$, we then have a {\it Partially Observable Markov Decision Process (POMDP)}. We let $O(y_t \in \cdot | x_t=x)$ denote the transition kernel for the measurement variables.  We will assume that $c$ is continuous and bounded, though the boundedness can be relaxed.

We assume in the following that $\mathbb{X}$ is a compact subset of a Polish space and that $\mathbb{Y}$ is finite. We assume that $\mathbb{U}$ is a compact set. However, without any loss, but building on {\cite[Chapter 3]{SaLiYuSpringer}}, under weak Feller continuity conditions (i.e., $E[f(x_1)|x_0=x,u_0=u]$ is continuous in $(x,u) \in \mathbb{X}\times\mathbb{U}$ for every bounded continuous $f$), we can approximate $\mathbb{U}$ with a finite set with an arbitrarily small performance loss. Accordingly, we will assume that this set is finite. 

The same applies when a POMDP is reduced to a belief-MDP and the belief-MDP is weak Feller: POMDPs can be reduced to a completely observable Markov process \cite{Yus76,Rhe74}, whose states are the posterior state distributions or {\it beliefs}: $\pi_t(\,\cdot\,) := P\{X_{t} \in \,\cdot\, | Y_0,\ldots,Y_t, U_0, \ldots, U_{t-1}\} \in {\cal P}(\mathbb{X})$, where ${\cal P}(\mathbb{X})$ is the set of probability measures on $\mathbb{X}$. We call $\pi_t$ the filter process, whose transition probability can be constructed via a Bayesian update. With $F(\pi,u,y) := P\{X_{t+1} \in \,\cdot\, | \pi_t = \pi, u_t = u, y_{t+1} = y\}$, and the stochastic kernel $O(\,\cdot\, | \pi,u) = P\{y_{t+1} \in \,\cdot\, | \pi_t = \pi, u_t = u\}$, we can write a transition kernel, $\eta$, for the belief process [J43]: 
\begin{align}\label{etaDefinition}
\eta(\,\cdot\,|\pi,u) = \int_{\mathbb{Y}} 1_{\{F(\pi,u,y) \in \,\cdot\,\}} O(dy|\pi,u).
\end{align}
The equivalent cost function is \[\tilde{c}(\pi_t,u_t) := \int_{\mathbb{X}} c(x,u_t) \pi_t(dx).\] Thus, the filter process defines a {\it belief}-MDP. $\eta$ is a weak Feller kernel if (a) If ${\cal T}$ is weakly continuous and the measurement kernel $O(y_t \in \cdot | x_t=x)$ is total variation continuous \cite{FeKaZg14}, or (b) if the kernel ${\cal T}$ is total variation continuous (with no assumptions on $O$) \cite{KSYWeakFellerSysCont}.

We will also consider multi-agent models, to be discussed further below.


Recall (\ref{Q_fixed}) which is the limit of the Q iterates if they converge:
\begin{align*}
Q^*(s,u)= C^*(s,u) + \beta \sum_{s_1\in \mathbb{S}}V^*(s_1)P^*(s_1|s,u).
\end{align*}
We note that these Q values correspond to an MDP model with state space $\mathbb{S}$, the action space $\mathbb{U}$, the stage-wise cost function is $C^*(s,u)$ and the transition function is $P^*(s_1|s,u)$. Hence, the policies constructed using these Q values are optimal for the corresponding MDP. In the following, we will present some bounds in terms of how `close' the original control model, and the approximate MDP model the limit Q values correspond to.

\subsection{Finite MDPs} 
Consider a finite MDP where the state process $x_t$ takes values in some finite set $\mathbb{X}$, the control action process $u_t$ takes values in some finite set $\mathbb{U}$. The dynamics for the state process is governed by the following 
\begin{align*}
x_{t+1}\sim P^*(\cdot|x_t,u_t)
\end{align*}
and at each time $t$, the controller receives a stage-wise cost $$c(x_t,u_t).$$

\begin{corollary}
The iterations in (\ref{iterateAlgM}) converges a.s. with $S_t=x_t$ and $C_t = c(x_t,u_t)$, if the learning rates $\alpha_t$ satisfy Assumption \ref{main_assmp}(i). Furthermore, the limit $Q^*$ is the optimal Q values of the system. 
\end{corollary}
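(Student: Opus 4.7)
The plan is to derive the corollary as a direct application of Theorem \ref{main_thm}, so the core task is to verify that Assumption \ref{main_assmp} parts (ii) and (iii) hold in the finite MDP setup (part (i) is assumed on the learning rates) and then to identify the fixed point $Q^*$ with the optimal Q function.

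First, I would observe that the learning rate condition $\sum_t \alpha_t(s,u)=\infty$ almost surely, combined with the particular form $\alpha_t(s,u)=1/(1+\sum_{k\le t}\mathbbm{1}_{\{S_k=s,U_k=u\}})$, forces each state-action pair $(s,u)\in\mathbb{S}\times\mathbb{U}$ to be visited infinitely often almost surely. Let $\tau_n(s,u)$ denote the $n$-th visit time to $(s,u)$. Because $x_{t+1}\sim P^*(\cdot|x_t,u_t)$ is a genuine controlled Markov transition, the strong Markov property at each $\tau_n(s,u)$ implies that the samples $\{x_{\tau_n(s,u)+1}\}_{n\ge 1}$ are conditionally i.i.d.\ with distribution $P^*(\cdot|s,u)$.

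Next, I would use this to verify Assumption \ref{main_assmp}(ii) and (iii). For (ii), since $C_t=c(x_t,u_t)$ is a deterministic function of $(x_t,u_t)$, the conditional averages trivially satisfy
\begin{align*}
\frac{\sum_{k=0}^t c(x_k,u_k)\mathbbm{1}_{\{S_k=s,U_k=u\}}}{\sum_{k=0}^t \mathbbm{1}_{\{S_k=s,U_k=u\}}} = c(s,u),
\end{align*}
so $C^*(s,u)=c(s,u)$. For (iii), I apply the strong law of large numbers to the i.i.d.\ sequence $\{f(x_{\tau_n(s,u)+1})\}_n$ which has mean $\int f(s_1)P^*(ds_1|s,u)$; the random time-change by $\tau_n(s,u)$ (finite a.s.\ for each $n$) gives almost sure convergence of the conditional averages to $\int f(s_1)P^*(ds_1|s,u)$, with the same $P^*$ as the MDP's transition kernel.

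Having verified Assumption \ref{main_assmp}, Theorem \ref{main_thm} yields $Q_t(s,u)\to Q^*(s,u)$ almost surely, where $Q^*$ solves
\begin{align*}
Q^*(s,u)=c(s,u)+\beta\sum_{s_1\in\mathbb{S}} V^*(s_1)P^*(s_1|s,u),\qquad V^*(s_1)=\min_{u'}Q^*(s_1,u').
\end{align*}
This is exactly the Bellman optimality equation for the finite MDP with transition kernel $P^*$ and cost $c$, which has a unique solution, namely the optimal Q function. Hence the limit is the optimal Q function, completing the proof.

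The main conceptual step, and the only one requiring care, is the verification of Assumption \ref{main_assmp}(iii): one must justify the passage from the conditional empirical averages along the random subsequence $\{\tau_n(s,u)\}$ to the strong law of large numbers via the strong Markov property, and ensure that $\tau_n(s,u)<\infty$ almost surely for every $n$. Once sufficient exploration (infinite visitation) is established from the learning rate hypothesis, the rest is routine.
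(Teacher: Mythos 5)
Your proposal is correct and follows the route the paper intends: the corollary is stated as a direct application of Theorem \ref{main_thm}, with Assumption \ref{main_assmp}(ii) holding trivially since $C_t=c(x_t,u_t)$ is a deterministic function of the visited pair, Assumption \ref{main_assmp}(iii) holding by the controlled Markov property applied at the (a.s.\ finite) successive visit times together with a law of large numbers along that subsequence, and the limit identified with the unique solution of the Bellman optimality equation. Your flagged care point (justifying the i.i.d./martingale-difference structure of the sampled next states at the stopping times $\tau_n(s,u)$) is exactly the right technical step to spell out, and the argument goes through as you describe.
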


\begin{remark}
This, of course, is a standard result \cite{Watkins,TsitsiklisQLearning,Baker,CsabaLittman,Szepesvari,BertsekasTsitsiklisNeuro}. However, we emphasize that different from the standard martingale proof (e.g. \cite{jaakkola1994convergence, TsitsiklisQLearning}), we do not need to separately establish the boundedness of the iterates due to the convergence property. Accordingly, the above can also be seen as an alternative proof of the standard Q-learning algorithm, though we restrict the exploration policy (unlike the standard proof where such a restriction is not needed as long as each state action pair is visited infinitely often with no ergodicity condition). We also note that avoiding the boundedness of the iterates is essential to extend the result to non-Markovian environments. 
\end{remark}

\subsection{Quantized Q-Learning for Weakly Continuous MDPs with General Spaces}\label{secWeakFellerMDP}

In this section, we assume that $\mathbb{X}$ is a compact subset of a Polish space and that $\mathbb{Y}$ and $\mathbb{U}$ are finite sets. 

Consider a controlled  Markov chain $X_t$ whose dynamics are determined by 
\begin{align*}
X_{t+1}\sim \mathcal{T}(\cdot|x_t,u_t).
\end{align*} 
Furthermore, let $C_t:=c(X_t,U_t)$ take values from a bounded set. The controller observes the cost realizations and some noisy version of the hidden state variable. In particular, we assume that the controller observes the measurement process $Y_t$ as 
\begin{align}
Y_t=g(X_t,V_t)\label{obs_equ}
\end{align}
for some measurable function $g$ and for some i.i.d. noise process $V_t$. 

In the following, we let the measurement structure be so that it corresponds to a quantization of the state variable $X_t$: We discretize continuous MDPs, where the state space $\mathbb{X}$ is quantized such that for disjoint $\{B_i\}_{i=1}^M$ with $\cup_{i=1}^M B_i =\mathbb{X}$, we define a finite set $\mathbb{S}=\{y_1,\dots,y_m\}$ and write
\begin{align*}
y_i=g(x), \text{ if } x\in B_i.
\end{align*}

We take $S_k=g(X_k)=Y_k$. Therefore, the problem can be seen as a POMDP and thus an adaptation of Assumption \ref{erg_assmp} will guarantee the convergence of the iterations in (\ref{iterateAlgM}). In particular, we present the following assumption that implies Assumption \ref{erg_assmp} in the context of quantized MDPs.

\begin{assumption}\label{quant_erg}
Under the exploration policy $\gamma$ and initialization, the controlled state and control action joint process $\{X_t,U_t\}$ is asymptotically ergodic in the sense that for any measurable function $f$ we have that
\begin{align*}
\lim_{N\to \infty}\frac{1}{N}\sum_{t=0}^{N-1}f(X_t,U_t)=\int f(x,u)\pi^\gamma(dx,du)
\end{align*}
for some $\pi^\gamma \in\P(\mathbb{X}\times\mathbb{U})$ such that $\pi^\gamma(B_i\times u)>0$ for any quantization bin $B_i$ and for any $u\in\mathbb{U}$.
\end{assumption}

We note that a sufficient condition for the ergodicity assumption, for every initialization of $X_0$, would be positive Harris recurrence under the exploration policy.

\begin{corollary}
The iterations given in (\ref{iterateAlgM}) converges almost surely under Assumption \ref{quant_erg}  and Under Assumption \ref{main_assmp} (i) with  $S_k=g(X_k)=Y_k$ and $C_k=c(X_k,U_k)$.
\end{corollary}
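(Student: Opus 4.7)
The strategy is to invoke Theorem \ref{main_thm} directly with $S_t := g(X_t) = Y_t$ and $C_t := c(X_t, U_t)$. Part (i) of Assumption \ref{main_assmp} is hypothesized in the corollary, and, as observed immediately after the statement of Assumption \ref{main_assmp}, parts (ii) and (iii) both follow from the umbrella ergodicity statement in Assumption \ref{erg_assmp}. Hence the entire task reduces to verifying Assumption \ref{erg_assmp} for the tuple $(S_{t+1},S_t,U_t,C_t)$ using Assumption \ref{quant_erg}, which only supplies ergodicity for the pair $(X_t,U_t)$.

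To bridge this gap, any bounded measurable $f$ on $\mathbb{S}\times\mathbb{S}\times\mathbb{U}\times\mathbb{R}$ can be rewritten as $h(x_1,x,u) := f(g(x_1),g(x),u,c(x,u))$, a bounded measurable function of $(X_{t+1},X_t,U_t)$. I plan to absorb the extra $X_{t+1}$ by a one-step conditioning. Define
\[
\phi(x,u) := \int h(x_1,x,u)\,\mathcal{T}(dx_1\mid x,u),
\]
so that, with $\mathcal{F}_t := \sigma(X_0,U_0,\ldots,X_t,U_t)$, the controlled Markov property of $X_t$ gives $E[h(X_{t+1},X_t,U_t)\mid \mathcal{F}_t] = \phi(X_t,U_t)$. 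Consequently the increments $D_t := h(X_{t+1},X_t,U_t) - \phi(X_t,U_t)$ form a uniformly bounded martingale difference sequence, so the martingale strong law of large numbers yields $\frac{1}{N}\sum_{t=0}^{N-1} D_t \to 0$ almost surely. Combining this with Assumption \ref{quant_erg} applied to the bounded measurable function $\phi$ gives
\[
\frac{1}{N}\sum_{t=0}^{N-1} h(X_{t+1},X_t,U_t) \to \int\!\!\int h(x_1,x,u)\,\mathcal{T}(dx_1\mid x,u)\,\pi^\gamma(dx,du)\quad\text{a.s.}
\]

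This identifies the limit measure $\pi$ required in Assumption \ref{erg_assmp} as the pushforward of $\mathcal{T}(dx_1\mid x,u)\,\pi^\gamma(dx,du)$ under the map $(x_1,x,u)\mapsto(g(x_1),g(x),u,c(x,u))$. The positivity condition $\pi(\mathbb{S}\times\{s\}\times\{u\}\times\mathbb{R}) > 0$ for each $(s,u)=(y_i,u)\in\mathbb{S}\times\mathbb{U}$ then collapses to $\pi^\gamma(B_i\times\{u\}) > 0$, which is precisely the second half of Assumption \ref{quant_erg}. With Assumption \ref{erg_assmp} thereby established, Theorem \ref{main_thm} delivers $Q_t(s,u)\to Q^*(s,u)$ almost surely.

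The only step that is not pure bookkeeping is the conditioning-plus-martingale argument that promotes ergodicity of $(X_t,U_t)$ to ergodicity of the triple $(X_{t+1},X_t,U_t)$ along with the cost realization. I expect this to be routine given the controlled-Markov structure of $X_t$ and the boundedness of $h$ (guaranteed by the finiteness of $\mathbb{S},\mathbb{U}$ and the standing boundedness of $c$), but it is genuinely the only place where one uses that $X_t$ is controlled Markov rather than merely asymptotically ergodic.
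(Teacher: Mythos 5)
Your proposal is correct and follows essentially the same route as the paper, which proves the corollary simply by asserting that Assumption \ref{quant_erg} implies Assumption \ref{erg_assmp} (hence Assumption \ref{main_assmp}(ii)--(iii)) for $S_t=g(X_t)$, $C_t=c(X_t,U_t)$, and then invoking Theorem \ref{main_thm}, with the positivity requirement reducing to $\pi^\gamma(B_i\times\{u\})>0$ exactly as you note. The conditioning-plus-martingale-difference argument you give (using the kernel $\mathcal{T}$ and the bounded martingale SLLN to lift ergodicity of $(X_t,U_t)$ to ergodicity of $(X_{t+1},X_t,U_t)$ and the cost) is precisely the detail the paper leaves implicit, and it is a valid way to close that gap.
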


The limit Q values correspond to an approximate control model (see \cite{KSYContQLearning}). For near optimality of the learned polices  \cite[Corollary 12]{KSYContQLearning} notes the following:
\begin{assumption}\label{wass_assmpt}
\begin{itemize}
\item[(a)] $\mathbb{X}$ is compact. 
\item[(b)] There exists a constant $\alpha_c>0$ such that $|c(x,u)-c(x',u)|\leq \alpha_c \|x-x'\|$ for all $x,x'\in\mathbb{X}$ and for all $u\in\mathbb{U}$.
\item[(c)] There exists a constant $\alpha_T>0$ such that $W_1(\mathcal{T}(\cdot|x,u),\mathcal{T}(\cdot|x',u))\leq \alpha_T\|x-x'\|$ for all $x,x'\in\mathbb{X}$ and for all $u\in\mathbb{U}$.
\end{itemize}
\end{assumption}

\begin{theorem}\cite[Corollary 12]{KSYContQLearning}
\begin{itemize}
\item[(a)] Let Assumption~\ref{wass_assmpt} hold . Then, for the policy constructed from the limit Q values, say $\hat{\gamma}$, we have
\begin{align*}
\sup_{x_0\in\mathbb{X}}\left|J_\beta(x_0,\hat{\gamma})-J^*_\beta(x_0)\right|\leq  \frac{2\alpha_c}{(1-\beta)^2(1-\beta\alpha_T)}\bar{L}.
\end{align*}
where
\begin{align*}
\bar{L}&:=\max_{i=1,\dots,M}\sup_{x,x'\in B_i}\|x-x'\|.
\end{align*}
\item[(b)] For asymptotic convergence (without a rate of convergence) to optimality as the quantization rate goes to $\infty$, only weak Feller property of ${\cal T}$ is sufficient for the the algorithm to be near optimal. 
\end{itemize}

\end{theorem}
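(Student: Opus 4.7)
The plan is to recognize $Q^*$ from Theorem \ref{main_thm} as the optimal Q-values of an auxiliary finite-state MDP on $\mathbb{S}$ whose cost $C^*$ and transition kernel $P^*$ are obtained by conditioning the invariant measure $\pi^\gamma$ of Assumption \ref{quant_erg} on each quantization bin. Since $\hat\gamma$ is optimal for this auxiliary model, the question is how much is lost when $\hat\gamma$ is implemented on the true continuous MDP. This is settled by the standard two-step argument: first convert per-stage approximation errors between the true and auxiliary models into Q-function errors via Bellman contraction, then convert the Q-function errors into policy performance loss via a simulation (performance-difference) lemma.

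For part (a), I would first establish two per-stage approximation inequalities. Using Assumption \ref{wass_assmpt}(b), for any $x\in B_i$ one has $|c(x,u)-C^*(y_i,u)|\leq \alpha_c\bar L$, because $C^*(y_i,u)$ is an average of $c(\cdot,u)$ over points in $B_i$, all within distance $\bar L$ of $x$. Using Assumption \ref{wass_assmpt}(c) together with the convexity of $W_1$ under mixtures, one obtains $W_1(\mathcal{T}(\cdot|x,u),\bar P^*(\cdot|g(x),u))\leq \alpha_T\bar L$, where $\bar P^*$ denotes the natural lifting of $P^*$ back to $\mathbb{X}$ through the invariant conditional. Next I would show, by a fixed-point argument on the Bellman operator acting on the space of bounded Lipschitz functions and using the Kantorovich dual form of $W_1$, that the true optimal value function $J_\beta^*$ is Lipschitz with constant at most $\alpha_c/(1-\beta\alpha_T)$; this is where the implicit requirement $\beta\alpha_T<1$ enters.

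Feeding these ingredients into the two-step argument completes part (a): the per-stage Bellman discrepancy between the true and auxiliary MDPs, measured through the Lipschitz function $J_\beta^*$, is at most $\alpha_c\bar L+\beta\alpha_T\bar L\cdot\alpha_c/(1-\beta\alpha_T)=\alpha_c\bar L/(1-\beta\alpha_T)$; Bellman contraction converts this per-stage error into a Q-function error of order $\alpha_c\bar L/[(1-\beta)(1-\beta\alpha_T)]$; and the simulation lemma, which bounds $\|J_\beta(\cdot,\hat\gamma)-J_\beta^*\|_\infty$ by $2/(1-\beta)$ times the sup-norm Q-function error, yields the stated constant $2\alpha_c/[(1-\beta)^2(1-\beta\alpha_T)]$. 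For part (b), the Lipschitz and Wasserstein inputs are unavailable, so quantitative rates are out of reach; instead, under weak Feller continuity of $\mathcal{T}$ and compactness of $\mathbb{X}\times\mathbb{U}$, the map $(x,u)\mapsto \int f\,d\mathcal{T}(\cdot|x,u)$ is uniformly continuous on $\mathbb{X}\times\mathbb{U}$ for each bounded continuous $f$. Along any refining sequence of quantizations with $\bar L_n\to 0$, this uniform continuity upgrades pointwise weak convergence of the lifted kernels to a form strong enough to pass through the Bellman recursion; combined with an Arzel\`a--Ascoli argument on the value iterates, standard weak-Feller approximation results as surveyed in \cite{SaLiYuSpringer} then deliver uniform convergence of the optimal values and asymptotic optimality of the quantized policies.

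The main obstacle is the Lipschitz step for $J_\beta^*$ in part (a): invariance of the class of bounded Lipschitz functions with controlled constant under the Bellman operator is delicate when $\beta\alpha_T$ is close to $1$, and requires careful joint use of the Kantorovich dual and compactness of $\mathbb{X}$. In part (b), the subtle point is upgrading pointwise weak convergence of kernels to the uniform-in-$(x,u)$ convergence needed to close the value-function approximation, which leans crucially on the compactness of $\mathbb{X}\times\mathbb{U}$ together with equicontinuity of the finite-horizon value iterates.
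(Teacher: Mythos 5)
Your outline is essentially the argument behind this result: the paper itself does not prove the statement but imports it from \cite{KSYContQLearning}, where the bound is obtained in the same way you describe --- identifying the limit $Q^*$ with the optimal Q-values of the invariant-measure-weighted finite model, bounding the per-stage cost and Wasserstein kernel mismatch by $\alpha_c\bar L$ and $\alpha_T\bar L$, using the Lipschitz bound $\alpha_c/(1-\beta\alpha_T)$ on the optimal value function (hence the implicit $\beta\alpha_T<1$), and composing the fixed-point and greedy-policy (performance-difference) estimates to get $\frac{2\alpha_c}{(1-\beta)^2(1-\beta\alpha_T)}\bar L$, with part (b) following from the weak-Feller finite-model approximation results surveyed in \cite{SaLiYuSpringer}. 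So the proposal is correct and takes essentially the same route as the cited proof.
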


\begin{remark}
Further error bounds under different set of assumptions, such as for systems with non-compact state space $\mathbb{X}$ and non-uniform quantization and models with total variation continuous transition kernels can be found in \cite{KSYContQLearning}. Q-learning for average cost problems involving continuous space models has recently been studied in \cite{ky2023qaverage}.
\end{remark}

%
%
%
%

\subsection{Finite Window Memory POMDP with Uniform Geometric Controlled Filter Stability}
We now assume that $\mathbb{X}$ is a compact subset of a Polish space and that $\mathbb{Y}$ and $\mathbb{U}$ are finite sets.

Suppose that the controller keeps a finite window of the most recent $N$ observation and control action variables, and perceives this as the {\it state} variable, which is in general non-Markovian. That is we take \[S_t = \{Y_{[t-N,t]},U_{[t-N,t-1]}\},\] and $C_t:=c(X_t,U_t)$.

In this case, the pair $(S_t,X_t,U_t)$ forms a controlled Markov chain, even if $(S_t,U_t)$ does not. Under Assumption \ref{erg_assmp}, it can be shown that Assumption \ref{main_assmp} holds. We state the ergodicity assumption formally next.

\begin{assumption}\label{POMDP_unif_erg}
\begin{itemize}

\item[(i)] Under the exploration policy $\gamma$ and initialization, and the controlled state and control action joint process $\{X_t,U_t\}$ is asymptotically ergodic in the sense that for any measurable function $f$ we have that
\begin{align*}
\lim_{N\to \infty}\frac{1}{N}\sum_{t=0}^{N-1}f(X_t,U_t)=\int f(x,u)\pi^\gamma(dx,du)
\end{align*}
for some $\pi^\gamma \in\P(\mathbb{X}\times\mathbb{U})$. Furthermore, we have that $P(Y_t = y | x) > 0$ for every $x \in \mathbb{X}$.
\item[(ii)] Assumption \ref{erg_assmp}(i) holds with $S_t = \{Y_{[t-N,t]},U_{[t-N,t-1]}\}$. 
\end{itemize}
\end{assumption}

We note that a sufficient condition for the ergodicity assumption, for every initialization of $X_0$, would be positive Harris recurrence under the exploration policy.

\begin{corollary}\label{finMemCor}
Under Assumption \ref{POMDP_unif_erg} and Assumption \ref{main_assmp}(i), the iterations in (\ref{iterateAlgM}) converges a.s. with $S_t = \{Y_{[t-N,t]},U_{[t-N,t-1]}\}$ and $C_t:=c(X_t,U_t)$.
\end{corollary}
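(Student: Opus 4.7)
The plan is to verify the umbrella Assumption \ref{erg_assmp} for the extended state $S_t = \{Y_{[t-N,t]}, U_{[t-N,t-1]}\}$ and cost $C_t = c(X_t, U_t)$, after which Theorem \ref{main_thm} applies directly: Assumption \ref{erg_assmp} implies Assumption \ref{main_assmp}(ii)--(iii) as spelled out in the paragraph immediately after Assumption \ref{main_assmp}, and Assumption \ref{main_assmp}(i) is given by hypothesis. This splits the work into two substantive steps---lifting ergodicity to the finite-window process, and verifying the positivity of the limiting measure on every $(s,u)$ pair.

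First I would lift the ergodicity of $(X_t, U_t)$ supplied by Assumption \ref{POMDP_unif_erg}(i) to the joint process $(S_{t+1}, S_t, U_t, C_t)$. Since the measurement noise $V_t$ in (\ref{obs_equ}) is i.i.d. and exogenous and the exploration policy $\gamma$ is stationary (either memoryless or a measurable function of $S_t$), the augmented process $(X_t, V_t, U_t)_t$ is ergodic. Each of $S_{t+1}$, $S_t$, and $C_t$ is a measurable function of a bounded window of $(X_k, V_k, U_k)$, so Birkhoff's ergodic theorem applied to these shift-invariant functionals gives, almost surely,
\[
\frac{1}{N}\sum_{t=0}^{N-1} f(S_{t+1}, S_t, U_t, C_t) \to \int f(s_1, s, u, c)\, \pi(ds_1, ds, du, dc)
\]
for every bounded measurable $f$, where $\pi$ is determined by $\pi^\gamma$, the transition kernel $\mathcal{T}$, the observation kernel $O$, and $\gamma$.

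Second I would verify $\pi(\mathbb{S} \times \{s\} \times \{u\} \times \mathbb{R}) > 0$ for every $(s,u)\in\mathbb{S}\times\mathbb{U}$. Writing $s = (y_{-N:0}, u_{-N:-1})$, the joint probability that the trajectory realizes exactly this window and is then followed by action $u$ decomposes as a product of three strictly positive factors: the exploration probabilities $\gamma(u_k\,|\,\cdot)$ at each of the $N+1$ time steps (positive by design of the exploration rule), the observation probabilities $P(Y_k = y_k \mid X_k)$ along the trajectory (positive by the hypothesis $P(Y_t=y\mid x) > 0$ in Assumption \ref{POMDP_unif_erg}(i)), and the transition probabilities integrated against the marginal stationary state distribution induced by $\pi^\gamma$. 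Hence the limiting frequency of $\{S_t = s,\, U_t = u\}$ is strictly positive.

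With Assumption \ref{erg_assmp} verified, taking $f = C \mathbbm{1}_{\{S=s,U=u\}}$ and $f = g(S_1)\mathbbm{1}_{\{S=s,U=u\}}$ yields Assumption \ref{main_assmp}(ii)--(iii) with $C^*(s,u) = \int c\, \pi(dc\mid s,u)$ and $P^*(s_1\mid s,u) = \pi(S_1 = s_1 \mid s,u)$, exactly as in the computation following Assumption \ref{main_assmp}; Theorem \ref{main_thm} then delivers the claim. The main obstacle is the positivity verification: the finite window $S_t$ couples $N+1$ successive observations with $N$ past actions, so one must chain together positivity of the observation kernel, positivity of the exploration policy on actions, and the support of $\pi^\gamma$ along the entire window, all without assuming a full positive Harris recurrence structure. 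A secondary subtlety is ensuring that the full joint $(X_t, S_t, U_t)$ is genuinely Markov rather than only the projection $(X_t, U_t)$ being ergodic; this is automatic when $\gamma$ depends on at most $S_t$, which matches the standard setup in this regime.
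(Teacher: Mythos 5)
Your overall skeleton (verify the umbrella Assumption \ref{erg_assmp} for the window state, deduce Assumption \ref{main_assmp}(ii)--(iii) as in the computation following that assumption, invoke Theorem \ref{main_thm}) matches the intended argument, but you have taken on a burden the corollary does not ask you to carry, and your discharge of that burden has a real gap. The hypotheses include Assumption \ref{POMDP_unif_erg}(ii), which directly postulates the umbrella ergodicity condition of Assumption \ref{erg_assmp} for $S_t=\{Y_{[t-N,t]},U_{[t-N,t-1]}\}$; the intended proof is therefore essentially one line: (ii) supplies the ergodic averages and positivity for the window process, this yields Assumption \ref{main_assmp}(ii)--(iii), and Theorem \ref{main_thm} concludes. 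You instead try to \emph{derive} the window-level ergodicity from part (i) alone. That derivation does not go through as written: part (i) only asserts a.s. convergence of time averages of functions of $(X_t,U_t)$ for the given, generally non-stationary, initialization, so you cannot invoke Birkhoff's ergodic theorem for the augmented process $(X_t,V_t,U_t)$ --- no stationarity (nor even Markovianity of $(X_t,U_t)$, if the exploration policy feeds back on $S_t$) is available. Asymptotic ergodicity of one-step marginal averages does not automatically yield a.s. convergence of averages of window functionals $f(S_{t+1},S_t,U_t,C_t)$ from an arbitrary initialization; this is precisely the issue the paper sidesteps by stating (ii) as a separate hypothesis (and, relatedly, why Remark \ref{remarkBenefitFiniteMem} emphasizes working under unique ergodicity rather than positive Harris recurrence). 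Your step could be repaired for a memoryless exploration policy via a conditional-expectation plus martingale-SLLN argument (replace $f$ by $\bar f(x)=E[f(\text{window})\mid X_t=x]$, apply the assumed ergodicity to $\bar f$, and control the residual by splitting the sum into $N+1$ non-overlapping subsequences of bounded martingale differences), but that argument is absent, and it still would not cover window-dependent exploration.

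On positivity: chaining $P(Y_t=y\mid x)>0$ with positive exploration probabilities is the right structural reason why the positivity in Assumption \ref{erg_assmp} is plausible here, but as written it only shows the event $\{S_t=s,U_t=u\}$ has positive probability at each finite time; positivity of the \emph{limit} measure $\pi$ requires first identifying $\pi$ through the lifting step above, and it also implicitly uses that the exploration policy assigns positive mass to every action, which you should state explicitly (it is only implicit via Assumption \ref{main_assmp}(i)). None of this threatens the corollary itself --- under the stated hypotheses it follows immediately from Theorem \ref{main_thm} --- but as a self-contained proof your proposal has the stationarity/lifting gap above.
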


The question then is if the limit Q values correspond to a meaningful control problem, and how `close' this control problem to the original POMDP.  We denote by $J_\beta(\pi_N^-,\mathcal{T},\gamma^N)$  the value of the partially observed control problem when the initial prior measure of the hidden state $X_{N}$ at time $N$ is given by $\pi_N^-$ and when we use finite window control policy. In particular, the  costs are incurred after the $N$-measurements are collected. \cite[Theorem 4.1]{kara2021convergence} shows that the limit Q values indeed correspond to an approximate control problem, and notes the following bound on the optimality gap for the finite window control policies:

\begin{theorem} \cite[Theorem 4.1]{kara2021convergence}\label{thmFiniteMemory}
If we denote the policies constructed using the limit Q values by $\gamma^N$, and apply in the original problem, we get the following error bound:
\begin{align*}
E\left[J_\beta(\pi_N^-,\mathcal{T},\gamma^N)-J_\beta^*(\pi_N^-,\mathcal{T})|I_0^N\right]\leq \frac{2\|c\|_\infty }{(1-\beta)}\sum_{t=0}^\infty\beta^t L_t
\end{align*}
where $I_0^N$ is the first $N$ observation and control variables, and the expectation is taken with respect to different realizations of $I_0^N$ under the initial distribution of the hidden state $\pi_0$ and the exploration policy $\gamma$. Furthermore,
\begin{align*}
\pi_N^-=P(X_N\in\cdot|I_0^N)
\end{align*}
and
\begin{align}\label{definitionLFilterS}
L_t:=\sup_{\hat{\gamma}\in\hat{\Gamma}}E_{\pi_0^-}^{\hat{\gamma}}\left[\|P^{\pi_t^-}(X_{t+N}\in\cdot|Y_{[t,t+N]},U_{[t,t+N-1]})-P^{\pi^*}(X_{t+N}\in\cdot|Y_{[t,t+N]},U_{[t,t+N-1]})\|_{TV}\right]
\end{align} 
 and $\pi^*$ is the invariant measure on $x_t$ under the exploration policy $\gamma$.
\end{theorem}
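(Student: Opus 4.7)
The plan is to first identify precisely what MDP the limit $Q^*$ of Corollary \ref{finMemCor} is optimal for, then bound the loss from using its optimal policy $\gamma^N$ in the true POMDP by a filter-stability argument summed over time. Let $s = (y_{[t-N,t]}, u_{[t-N,t-1]})$ denote a generic window state. By the ergodicity in Assumption \ref{POMDP_unif_erg} together with Assumption \ref{main_assmp}, the limits $C^*(s,u)$ and $P^*(\cdot|s,u)$ exist and equal the conditional expectation of $c(X_t,u)$ and the conditional transition of the next window, computed under the stationary measure $\pi^*$ of the state process induced by the exploration policy. Concretely, $C^*(s,u) = \int c(x,u)\,P^{\pi^*}(dx \mid s)$, where $P^{\pi^*}(\cdot \mid s)$ is the posterior of $X_t$ given the window $s$ when $X_{t-N}$ is distributed according to $\pi^*$. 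Thus $Q^*$ is the optimal $Q$-function of the finite-state MDP whose state is the $N$-window and whose dynamics are driven by $(\pi^*, P^*, C^*)$, and $\gamma^N$ is optimal for this surrogate model.

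Next I would compare the two value functions $J_\beta(\pi_N^-, \mathcal{T}, \gamma^N)$ and $J_\beta^*(\pi_N^-, \mathcal{T})$ by expressing each as the expected discounted cost of a controlled belief-MDP. The policy $\gamma^N$ acts as if, at every time $t \geq N$, the conditional law of $X_t$ given the most recent $N$-window were $P^{\pi^*}(\cdot \mid s)$, whereas under the true dynamics starting from $\pi_N^-$ the true conditional law is $P^{\pi_t^-}(\cdot \mid s)$ with $\pi_t^-$ the true predictive distribution of $X_{t}$ given $I_0^{t}$ evaluated at time $t$. The discrepancy at each stage is therefore controlled by the total variation distance between these two posteriors, which is precisely the quantity $L_t$ of (\ref{definitionLFilterS}) after taking supremum over admissible history-dependent policies $\hat{\gamma}\in\hat{\Gamma}$ (this supremum is needed because the policy $\gamma^N$, when rolled forward, induces some such control).

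The bound would then be assembled in three pieces. First, one uses a standard dynamic programming simulation lemma: if two MDPs on the same state-action space differ only in their one-step cost and transition, and if the $t$-th stage discrepancy in the integrated value of any bounded function is bounded by $2\|c\|_\infty L_t$ in expectation, then the value functions differ by at most $\sum_{t \geq 0}\beta^t \cdot 2\|c\|_\infty L_t / (1-\beta)$ (the extra $1/(1-\beta)$ coming from propagating errors through the Bellman operator, see the argument in \cite[Theorem 4.1]{kara2021convergence}). Second, I would verify that the per-stage cost error $|\int c\,dP^{\pi_t^-}(\cdot|s) - \int c\,dP^{\pi^*}(\cdot|s)|$ is bounded by $\|c\|_\infty L_t$, and likewise for the one-step expected continuation value. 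Third, taking conditional expectation given $I_0^N$ and using the definition of $L_t$ as a supremum over $\hat{\Gamma}$ handles the fact that the control actions depend on the realized sample path.

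The hardest step will be making the comparison argument fully rigorous, namely (i) justifying that the relevant supremum over history-dependent policies in the definition of $L_t$ indeed upper bounds the expected filter discrepancy along the trajectory actually generated by $\gamma^N$ under $\mathcal{T}$, and (ii) iterating the one-step Bellman comparison in a way that the error at stage $t$ couples correctly with $L_t$ rather than with some uniform constant. Both items rely on the fact that the filter process with incorrect prior $\pi^*$ and the filter process with the correct prior $\pi_N^-$ are driven by the same observation and control sequences, so one can use a synchronous coupling; once the per-stage error is identified with $L_t$, summing the geometric series $\sum_t \beta^t L_t$ yields the stated bound.
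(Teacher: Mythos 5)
Your proposal is correct in outline and follows essentially the same route as the source of this result: the paper itself does not reprove the theorem but quotes it from \cite[Theorem 4.1]{kara2021convergence}, and the argument there is exactly what you describe --- identify the limit $Q^*$ as the optimal Q-function of a surrogate finite-window belief-MDP whose cost and kernel are the posteriors computed with the (incorrect) invariant prior $\pi^*$, bound the per-stage mismatch with the true predictor $\pi_t^-$ by the total-variation filter-stability term $L_t$ (taking the supremum over $\hat\Gamma$ precisely to dominate the trajectory induced by $\gamma^N$), and propagate the errors through the Bellman comparison to obtain the factor $\tfrac{2\|c\|_\infty}{1-\beta}\sum_t\beta^t L_t$. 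The only deviation is minor bookkeeping: in the cited proof the factor $2$ arises from a triangle inequality comparing the value of $\gamma^N$ under the true and surrogate models and then the surrogate and true optimal values, rather than from a per-stage $2\|c\|_\infty L_t$ bound, but this does not affect correctness.
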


\begin{remark}\label{remarkBenefitFiniteMem}
Theorem \ref{main_thm} allows us to relax the positive Harris recurrence assumption in \cite[Assumption 3]{kara2021convergence} to only unique ergodicity which is a significantly more relaxed condition for applications where the state process is uncountable.
\end{remark}

\begin{remark}
The term $L_t$ is related to the filter stability problem, and explicit bounds for this can be found in  \cite[Section 5]{kara2021convergence}, notably building on \cite{mcdonald2020exponential}.
\end{remark}



\subsection{Quantized Approximations for Weak Feller POMDPs with only Asymptotic Filter Stability}\label{belief_quant}

As noted earlier, any POMDP can be reduced to a completely observable Markov process (\cite{Yus76}, \cite{Rhe74}) (see (\ref{etaDefinition})), whose states are the posterior state distributions or {\it belief}s of the observer; that is, the state at time $t$ is
\begin{align}
\pi_t(\,\cdot\,) := P\{X_{t} \in \,\cdot\, | y_0,\ldots,y_t, u_0, \ldots, u_{t-1}\} \in {\cal P}(\mathbb{X}). \nonumber
\end{align}
We call this conditional probability measure process the {\it filter} process\index{Belief-MDP}. 

Recall the kernel $\eta$ (\ref{etaDefinition}) for the filter process. Now, by combining the quantized Q-learning above and the weak Feller continuity results for the non-linear filter kernel (\cite{FeKaZg14} \cite{KSYWeakFellerSysCont}), we can conclude that the setup in Section \ref{secWeakFellerMDP} is applicable though with a significantly more tedious analysis involving ergodicity requirements. Additionally, one needs to quantize probability measures (that is, beliefs or filter realizations). Accordingly, we take $S_t = g(\pi_t)$ for some quantizer 
\[g:{\cal P}(\mathbb{X}) \to {\cal P}(\mathbb{X})^M=:\{B_1,B_2, \cdots, B_{|{\cal P}(\mathbb{X})^M|}\},\] with $|{\cal P}(\mathbb{X})^M| < \infty$, and $C_t:=c(X_t,U_t)$. 

We state the ergodicity condition formally:

\begin{assumption}\label{belief_MDP_unif_erg}
Under the exploration policy $\gamma$ and initialization, the controlled belief state and control action joint process $\{\pi_t,U_t\}$ is asymptotically uniquely ergodic in the sense that for any measurable function $f$ we have that
\begin{align*}
\lim_{N\to \infty}\frac{1}{N}\sum_{t=0}^{N-1}f(\pi_t)=\int f(\pi)\eta^\gamma(d\pi)
\end{align*}
for some $\eta^\gamma \in\P({\cal P}(\mathbb{X})\times\mathbb{U})$ such that $\eta^\gamma(B_i)>0$ for any quantization bin $B_i \subset {\cal P}(\mathbb{X})$.
\end{assumption}

The condition that $\eta^\gamma(B)>0$ requires an analysis tailored for each problem. For example, if the quantization is performed as in \cite{kara2020near} by clustering bins based on a finite past window, then the condition is satisfied by requiring that $P(Y_t = y | x) > 0$ for every $x \in \mathbb{X}$. If the clustering is done, e.g. by quantization of the probability measures via first quantizing $\mathbb{X}$ and then quantizing the probability measures on the finite set (see \cite[Section 5]{SYLTAC2017POMDP}), then the initialization could be done according to the invariant probability measure corresponding to the hidden Markov source. For some applications, the quantization does not have to be uniform as the entire probability space ${\cal P}(\mathbb{X})$ may not be visited; in this case it suffices to have the conditions be restricted to the subset reachable from the initial probability measure. 

Unique ergodicity of the dynamics follows from results in the literature, such as, \cite[Theorem 2]{DiMasiStettner2005ergodicity} and \cite[Prop 2.1]{van2009uniformSPA}, which hold when the randomized control is memoryless under mild conditions on the process notably that the hidden variable is a uniquely ergodic Markov chain and the measurement structure satisfies filter stability in total variation in expectation (one can show that weak merging in expectation also suffices); we refer the reader to \cite[Figure 1]{mcdonald2018stability} for mild conditions leading to filter stability in this sense, which is related to stochastic observability \cite[Definition II.1]{mcdonald2018stability}. Notably, a uniform and geometric controlled filter stability is not required even though this would be sufficient. Therefore, due to the weak Feller property of controlled non-linear filters, we can apply the Q-learning algorithm to also belief-based models to arrive at near optimal control policies. Nonetheless, since positive Harris recurrence cannot be assumed for the filter process, the initial state may not be arbitrary: If the invariant measure under the exploration policy is the initial state, \cite[Prop 2.1]{van2009uniformSPA} implies that the time averages will converge as imposed in Assumption \ref{main_assmp}. A sufficient condition for unique ergodicity then is the following.
\begin{assumption}
Under the exploration policy $\gamma$ the hidden process $\{X_t\}$ is uniquely ergodic and the measurements are so that the filter is stable in expectation under weak convergence.
\end{assumption}

%
%

\begin{assumption}\label{filter_reg}
The controlled transition kernel for the belief process $\eta(\cdot|\pi,u)$ is Lipschitz continuous under the metric $W_1$ such that
\begin{align*}
W_1\left(\eta(\cdot|z,u),\eta(\cdot|z',u)\right)\leq \alpha_T W_1(z,z')
\end{align*}
for all $u$, and $z,z'\in\P(\mathbb{X})$ for some $\alpha_T<\infty$.
\end{assumption}

The following result from \cite[Theorem 2.3]{demirci2023average} provides a set of assumptions on the partially observed model to guarantee Assumption \ref{filter_reg} when $\P(\mathbb{X})$ is equipped with $W_1$ metric. We introduce the following notation before the result:

\begin{definition}\cite[Equation 1.16]{dobrushin1956central}\label{dob_def}
For a kernel operator $K:S_{1} \to \mathcal{P}(S_{2})$ (that is a regular conditional probability from $S_1$ to $S_2$) for standard Borel spaces $S_1, S_2$, we define the Dobrushin coefficient as:
\begin{align}
\delta(K)&=\inf\sum_{i=1}^{n}\min(K(x,A_{i}),K(y,A_{i}))\label{Dob_def}
\end{align}
where the infimum is over all $x,y \in S_{1}$ and all partitions $\{A_{i}\}_{i=1}^{n}$ of $S_{2}$.
\end{definition}
We note that this definition holds for continuous or finite/countable spaces $S_{1}$ and $S_{2}$ and $0\leq \delta(K)\leq 1$ for any kernel operator.

Let
\[ \tilde{\delta}({\cal T}):=\inf_{u \in \mathbb{U}} \delta(\mathcal{T}(\cdot|\cdot,u)). \]

\begin{proposition}\label{emre_result}\cite[Theorem 2.3]{demirci2023average}
\noindent
\begin{enumerate}
\item \label{compactness}
$(\mathbb{X}, d)$ is a compact metric space 
with diameter $D$ (where $D=\sup_{x,y \in \mathbb{X}} d(x,y)$).
\item \label{regularity}
There exists 
$\alpha \in R^{+}$such that 
$$
\left\|\mathcal{T}(\cdot \mid x, u)-\mathcal{T}\left(\cdot \mid x^{\prime}, u\right)\right\|_{T V} \leq \alpha d(x, x^{\prime})
$$
for every $x,x' \in \mathbb{X}$, $u \in \mathbb{U}$.
\item \label{dobr_coef}
$$K_2:=\frac{\alpha D (3-2\delta(Q))}{2} < 1.$$
\end{enumerate}

Under the conditions above we have
    $$
    W_{1}\left(\eta(\cdot \mid z_0, u), \eta\left(\cdot \mid z_0^{\prime},u\right)\right) 
    \leq \left(\frac{\alpha D (3-2\delta(O))}{2}\right) W_{1}\left(z_0, z_0^{\prime}\right) .
    $$
    for all $z_0,z_0' \in \mathcal{Z}$, $u \in \mathbb{U}$.
\end{proposition}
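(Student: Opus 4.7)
The natural plan is to work via the Kantorovich--Rubinstein dual representation of $W_1$. For any $1$-Lipschitz test function $f:\mathcal{P}(\mathbb{X})\to\mathbb{R}$ (Lipschitz with respect to $W_1$ on $\mathcal{P}(\mathbb{X})$), expand using the definition \eqref{etaDefinition}:
\[
\int f(\pi)\,\eta(d\pi\mid z_0,u) \;=\; \int_{\mathbb{Y}} f(F(z_0,u,y))\,O(dy\mid z_0,u),
\]
and similarly for $z_0'$. Introducing the hybrid term $\int f(F(z_0',u,y))\,O(dy\mid z_0,u)$, split the difference as
\[
\underbrace{\int \bigl[f(F(z_0,u,y))-f(F(z_0',u,y))\bigr]\,O(dy\mid z_0,u)}_{(\mathrm{I})}
\;+\;\underbrace{\int f(F(z_0',u,y))\,\bigl[O(dy\mid z_0,u)-O(dy\mid z_0',u)\bigr]}_{(\mathrm{II})}.
\]
So the task reduces to (I) controlling how the posterior $F(\cdot,u,y)$ depends on the prior in $W_1$, and (II) controlling the total variation between the two predictive observation laws, using that any $1$-Lipschitz $f$ on $(\mathcal{P}(\mathbb{X}),W_1)$ has oscillation at most $D$ by compactness (condition~\ref{compactness}).

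For term (I), the plan is to write $F(z_0,u,y)$ as a Bayesian update of the one-step predictive measure $(\mathcal{T}z_0)(\cdot\mid u):=\int \mathcal{T}(\cdot\mid x,u)z_0(dx)$, then use condition~\ref{regularity} to transfer the Lipschitz bound from $x\mapsto\mathcal{T}(\cdot\mid x,u)$ (in $\|\cdot\|_{TV}$) into a $W_1$-bound on $\mathcal{T}z_0$ versus $\mathcal{T}z_0'$ via the optimal coupling of $z_0,z_0'$. Integrating out the observation and using $W_1\le \tfrac{D}{2}\|\cdot\|_{TV}$ on $\mathcal{P}(\mathbb{X})$ yields a contribution proportional to $\alpha D\,W_1(z_0,z_0')$. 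For term (II), the idea is to express $O(\cdot\mid z,u)$ as the image of $z$ through the composed kernel $x\mapsto \mathcal{T}(\cdot\mid x,u)\mapsto O$, and apply the Dobrushin contraction $\|Kz-Kz'\|_{TV}\le (1-\delta(K))\|z-z'\|_{TV}$; together with the diameter bound on $f$, this produces a term of order $(1-\delta)\alpha D\,W_1(z_0,z_0')$ after again bounding TV by $\alpha d$ using condition~\ref{regularity} and the optimal coupling.

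Combining (I) and (II) and optimizing over the $1$-Lipschitz $f$, the two contributions add up to the target constant $\tfrac{\alpha D(3-2\delta)}{2}$, the factor $(3-2\delta)/2$ arising precisely from weighting one full Lipschitz displacement (from the filter update) against a $(1-\delta)$-discounted displacement (from the predictive observation mismatch) together with the TV-to-$W_1$ factor $D/2$.

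\textbf{Main obstacle.} The genuinely delicate step is (I): the Bayesian normalization $F(\pi,u,y)=\frac{\text{numerator}(\pi,u,y)}{\text{denominator}(\pi,u,y)}$ is nonlinear in $\pi$, and a naive Lipschitz estimate on the numerator alone can be amplified by a small denominator when the likelihood of $y$ is low. The plan is to carry out the perturbation analysis \emph{after} integrating against $O(dy\mid z_0,u)$, so that the denominator appears as a weight that cancels with the observation measure, yielding a clean expectation bound; this is also where the Dobrushin coefficient $\delta$ enters, since it quantifies how much the one-step kernel smooths out differences between priors and prevents the normalization from blowing up. Condition~\ref{dobr_coef} ($K_2<1$) is then exactly what makes the resulting constant a genuine contraction, which is the structural reason the composed belief kernel $\eta$ inherits $W_1$-Lipschitz continuity with a usable constant.
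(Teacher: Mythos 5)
The paper does not actually prove this proposition: it is imported verbatim from \cite[Theorem 2.3]{demirci2023average}, so the relevant comparison is with the proof there. Your overall architecture is the right one and essentially matches that argument: dualize $W_1$ over $1$-Lipschitz $f$ on $(\mathcal{P}(\mathbb{X}),W_1)$, insert the hybrid term to split into a posterior-update part (I) and an observation-marginal part (II), convert $W_1(z_0,z_0')$ into a total-variation bound on the predictors via the optimal coupling and condition~\ref{regularity} (giving $\|\mathcal{T}z_0-\mathcal{T}z_0'\|_{TV}\le \alpha W_1(z_0,z_0')$), use the oscillation/diameter bound $|\int f\,d\mu-\int f\,d\nu|\le \tfrac{D}{2}\|\mu-\nu\|_{TV}$, and bring in the Dobrushin coefficient of the measurement channel.

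However, there is a genuine gap at precisely the step you flag as the ``main obstacle,'' and it shows up in your constant. The heart of the result is a quantitative one-step lemma of the form
\begin{equation*}
\int \big\|F(z_0,u,y)-F(z_0',u,y)\big\|_{TV}\,O(dy\mid z_0,u)\;\le\;(2-\delta(Q))\,\big\|\mathcal{T}z_0-\mathcal{T}z_0'\big\|_{TV},
\end{equation*}
i.e.\ the expected total-variation distance between the two Bayes updates is controlled by the predictor distance with a $\delta$-dependent constant (this is the Dobrushin-coefficient filter-stability bound in the spirit of \cite{mcdonald2020exponential} that \cite{demirci2023average} invokes). You never state or establish such a lemma; saying the normalizing denominator ``cancels'' after integrating against $O(dy\mid z_0,u)$ does not by itself yield the constant, and the estimate is not a routine computation because conditioning on an observation can \emph{expand} the TV distance between priors (by up to a factor of $2$ in expectation when the channel is not smoothing). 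Relatedly, your bookkeeping --- ``one full Lipschitz displacement'' from the filter update plus a $(1-\delta)$-discounted one from the observation mismatch --- sums to $(2-\delta)$, giving $\tfrac{\alpha D(2-\delta)}{2}$, not the stated $\tfrac{\alpha D(3-2\delta(Q))}{2}$; the correct split is $(2-\delta(Q))$ from term (I) and $(1-\delta(Q))$ from term (II). Also note a small misstatement in (I): what you need from condition~\ref{regularity} is a TV bound on $\mathcal{T}z_0$ versus $\mathcal{T}z_0'$ in terms of $W_1(z_0,z_0')$, not a $W_1$ bound. Finally, condition~\ref{dobr_coef} plays no role in proving the Lipschitz inequality itself --- it only makes the resulting constant a contraction for downstream use --- so it should not be presented as structurally necessary for the bound.
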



\begin{theorem}
\begin{itemize}
\item[(a)]
Suppose that under the exploration policy and initialization the controlled filter process satisfies Assumption \ref{belief_MDP_unif_erg} and \ref{main_assmp}(i) with $S_t=g(\pi_t)$, and $C_t=c(X_t,U_t)$. Then, the Q iterates converge almost surely. 
\item[(b)] Let Assumption \ref{filter_reg} hold such that $\alpha_T\beta<1$ and assume that the cost function $c(x,u)$ is Lipschitz continuous in $x$ such that 
\begin{align*}
\left|c(x,u)-c(x',u)\right|\leq \alpha_c|x-x'|.
\end{align*}
For the policy constructed using the limit Q values, say $\hat{\gamma}$ we have the following bound:
\begin{align*}
J_\beta^*(\pi_0,\hat{\gamma})- J_\beta^*(\pi_0)\leq  \frac{2\alpha_c}{(1-\beta)^2(1-\beta\alpha_T)}\bar{L}.
\end{align*} 
for some $K<\infty$ where 
\begin{align}\label{uniformError}
\bar{L}:=\sup_\pi W_1(\pi,g(\pi))
\end{align}


\item[(c)] The bound in (b) above also holds if (\ref{uniformError}) is relaxed to
\[\bar{L}:=\sup_{\pi \in \mathrm{supp( \eta^{\gamma})}, \pi \in B_i: \eta^{\gamma}(B_i) > 0} W_1\left(\pi,g(\pi)\right),\]
provided that $\pi_0$ is the invariant measure of $X_t$ under the exploration policy.

\item[(d)] For asymptotic convergence (without a rate of convergence) to optimality as the quantization rate goes to $\infty$ (i.e., $\bar{L} \to 0$), only weak Feller property of $\eta$ is sufficient for the the algorithm to be near optimal. 
\end{itemize}

\end{theorem}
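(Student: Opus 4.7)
Part (a) will be obtained by directly verifying that the hypotheses of Theorem \ref{main_thm} are met. Under Assumption \ref{belief_MDP_unif_erg}, the joint process $(\pi_t,U_t)$ is asymptotically ergodic with limiting measure $\eta^\gamma$ placing positive mass on every quantization bin; combined with the fact that $(\pi_t,X_t,U_t)$ forms a controlled Markov chain (so that empirical averages of any bounded measurable $f(\pi_{t+1},\pi_t,U_t,C_t)$ converge), the umbrella condition Assumption \ref{erg_assmp} (adapted to $S_t=g(\pi_t)$) holds, and hence Assumption \ref{main_assmp}(ii)-(iii) follows by the same reduction as in the discussion after Assumption \ref{erg_assmp}. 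Together with Assumption \ref{main_assmp}(i), Theorem \ref{main_thm} applies and we obtain a.s. convergence of the iterates to some $Q^*$ satisfying the fixed point equation (\ref{Q_fixed}).

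For part (b), the plan is to interpret the limit $Q^*$ as the optimal Q-function of an auxiliary finite-state MDP whose state is the quantized belief $g(\pi)$, with effective cost $C^*(s,u)$ and transition $P^*(\cdot\,|\,s,u)$ inherited via $\eta^\gamma$. I would then compare this approximate MDP against the true belief-MDP with kernel $\eta$ and cost $\tilde{c}(\pi,u)=\int c(x,u)\pi(dx)$ on the full space $\P(\mathbb{X})$ metrized by $W_1$. The key observation is that the Lipschitz cost hypothesis $|c(x,u)-c(x',u)|\le \alpha_c|x-x'|$ implies $|\tilde{c}(\pi,u)-\tilde{c}(\pi',u)|\le \alpha_c W_1(\pi,\pi')$, while Assumption \ref{filter_reg} gives the Wasserstein contraction constant $\alpha_T$ for $\eta$. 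Reproducing the argument behind the bound quoted from \cite[Corollary 12]{KSYContQLearning} for the analogous quantized MDP setting in Section \ref{secWeakFellerMDP} (but now on the belief space), one iterates the Bellman operator to obtain $|J_\beta(\pi,\hat{\gamma})-J_\beta^*(\pi)| \le \frac{2\alpha_c}{(1-\beta)^2(1-\beta\alpha_T)}\bar{L}$, where $\bar{L}=\sup_\pi W_1(\pi,g(\pi))$ measures the uniform quantization distortion. The condition $\alpha_T\beta<1$ is exactly what is needed to make the geometric series that arises in the value iteration error analysis converge.

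For part (c), the refinement is obtained by noting that when $\pi_0$ is the invariant measure of $\pi_t$ under the exploration policy, stationarity ensures $\pi_t$ remains almost surely in $\mathrm{supp}(\eta^\gamma)$ for every $t$. Consequently only bins $B_i$ with $\eta^\gamma(B_i)>0$ contribute, and one may replace the supremum over all of $\P(\mathbb{X})$ by a supremum over this support in the distortion $\bar{L}$. The rest of the estimate carries over unchanged since the policy $\hat{\gamma}$ derived from $Q^*$ is a function of $g(\pi_t)$ and all trajectories under consideration remain in the reachable set.

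Part (d) will follow from the weak Feller property of $\eta$ (justified via \cite{FeKaZg14,KSYWeakFellerSysCont}) together with standard approximation results for quantized MDPs on Polish state spaces (for instance \cite[Chapter 3]{SaLiYuSpringer} applied to the belief-MDP): as the quantization diameter tends to zero, the induced finite MDPs converge in an appropriate weak sense and their value functions converge to the true value function, giving asymptotic $\varepsilon$-optimality without a rate. \textbf{The main obstacle} I anticipate is in part (b) and (c), specifically in verifying that $Q^*$ really is the optimal Q-function of the stated quantized MDP (rather than merely a fixed point) and propagating the Lipschitz estimate through the value iteration when the quantized state lives in bins rather than at representative points; this requires a careful ``lift'' argument comparing trajectories in $\P(\mathbb{X})$ with trajectories in the quantized model, and making sure the contraction $\beta \alpha_T<1$ is applied to the right operator.
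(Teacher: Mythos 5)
Your plan coincides with the paper's own argument: part (a) is exactly the verification that Assumptions \ref{belief_MDP_unif_erg} and \ref{main_assmp}(i) put you in the scope of Theorem \ref{main_thm}, and parts (b)--(d) are obtained, as you propose, by transporting the quantized-MDP approximation bounds of \cite{KSYContQLearning} (Theorems 6--7, Corollary 12) to the belief-MDP metrized by $W_1$, using Assumption \ref{filter_reg} with $\beta\alpha_T<1$, the Lipschitz cost, and the weak Feller property of $\eta$ for the asymptotic statement. The ``lift'' issue you flag (identifying $Q^*$ with the optimal Q-function of the quantized model) is precisely what those cited results handle, so no new argument beyond your outline is needed.
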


The above approximation result, given the general convergence theorem Theorem \ref{main_thm}, follows from \cite{KSYContQLearning}[Theorem 6,7] under the provided assumptions.


\begin{remark}\label{comparisonRemark}
We now present a comparison between the two approaches above: filter quantization vs. finite window based learning:
\begin{itemize}
\item[(a)] \begin{itemize}
\item[(i)] For the filter quantization, we only need unique ergodicity of the filter process under the exploration policy for which asymptotic filter stability in expectation in weak or total variation is sufficient. 
\item[(ii)] The running cost can start immediately without waiting for a window of measurements. 
\item[(iii)] On the other hand, the controller must run the filter and quantize it in each iteration while running the Q-learning algorithm; accordingly the controller must know the model. 
\item[(iv)] Additionally, the initialization cannot be arbitrary (e.g. the initialization for the filter may be the invariant measure of the state under the exploration policy so that the iterations for the finite approximation given the initialization always remain in the absorbing set compatible with the invariant measure under exploration policy; this ensures that the infinite occupation conditions hold for the reachable quantized belief state and action pairs from the initialization).
\end{itemize}
\item[(b)] 
\begin{itemize}
\item[(i)] For the finite window approach, a uniform convergence of filter stability, via $L_t$, is needed and it does not appear that only asymptotic filter stability can suffice. 
\item[(ii)] On the other hand, this is a universal algorithm in that the controller does not need to know the model. 
\item[(iii)] Furthermore, the initialization satisfaction holds under explicit conditions; notably if the hidden process is positive Harris recurrent, the ergodicity condition holds for every initialization; both the convergence of the algorithm as well as its implementation will always be well-defined.
\end{itemize}
\end{itemize}
For each setup, however, we have explicit and testable conditions.
\end{remark}

\begin{remark}[Further Models: Continuous-Time and Applications]
We note that the richness of the convergence theorem manifests itself also in the applications involving continuous-time models \cite{bayraktar2022approximate} where quantized Q-learning finds a natural application area, and applications to optimal quantization \cite{cregg2023near} which also studies several subtleties with regard to ergodicity of belief dynamics.
\end{remark}

\subsection{Multi-Agent Models and Joint Learning Dynamics: Subjective Q-Learning and an Open Question} 

As our final application, we consider multi-agent models. Multi-agent reinforcement learning (often referred to as MARL) is the study of emergent behaviour in complex, and strategic environments, and is one of the important frontiers in artificial intelligence research.
Consider an environment with $N$-agents, each of which generate actions, and whose rewards impact one another. Notably,
\[x^i_{t+1}=f(x^i_t,u^i_t,u^{-i}_t,x^{-i}_t,w_t)\]
with cost criteria
\[\sum_t \beta^t c(x^i_t,u^i_t,u^{-i}_t,x^{-i}_t)\]
or mean-field models with
\[x^i_{t+1}=f(x^i_t,u^i_t,\mu^N_t,w_t)\]
and sample path costs
\[\sum_t \beta^t c(x^i_t,u^i_t,\mu^N_t),\]
where
\[\mu^N_t = \sum_{k=1}^N \delta_{x^i_t}(\cdot)\]
We assume several information structures, for each $m=1,\cdots, N$: (i) $I^m_t = \{{\bf x}_{[0,t]},u^m_{[0,t]} \}$, (ii) $I^m_t =  \{x^m_{[0,t]},\mu^N_{[0,t]} \}$, (iii) $I^m_t =  \{x^m_t,u^m_{[0,t]} \}$. Accordingly, for each agent $u^m_t = \gamma^m_t(I^m_t)$ for all $t \in \mathbb{Z}_+$. Given these policies, one would like to minimize the expected values of the cost functions defined above.

Study of such decentralized systems is known to be challenging both for stochastic teams and stochastic games, where the cost functions above may depend on individual agents. Learning theory for such systems entails two primary challenges:

The first immediate challenge for learning in such models is due to decentralization of information: some relevant information will be unavailable to some of the players. This may occur due to strategic considerations, as competing agents may wish to hide their actions or knowledge from their rivals, or it may occur simply because of obstacles in communicating, observing, or storing large quantities of information in decentralized systems and agents may be oblivious to their environment or the presence of other agents. 

The second difficulty inherent to MARL comes from the non-stationarity of the environment from the point of view of any individual agent. As an agent learns how to improve its performance, it will alter its behaviour, and this can have a destabilizing effect on the learning processes of the remaining agents, who may change their policies in response to outdated strategies. Notably, this issue arises when one tries to apply single-agent RL algorithms---which typically rely on state-action value estimates or gradient estimates that are made using historical data---in multi-agent settings. A number of studies have reported non-convergent play when single-agent algorithms using local information are employed, without modification, in multi-agent settings. Thus, for such models the main obstacle to convergence of Q-learning is due to the presence of multiple active learners leading to a non-stationary environment for all learners.  

\subsubsection{Two time scales and a Markov chain over play path graphs}
To overcome this obstacle, also building on inspiration from prior work \cite{foster2006regret, germano2007global,ArslanYukselTAC16} modifies the Q-learning for stochastic games as follows: In the variation of Q-learning, DMs are allowed to use constant policies for extended periods of time called \textit{exploration phases}. This is also referred to as {\it two-time scales} approach.

\begin{figure}[b]
\centering
\epsfig{figure=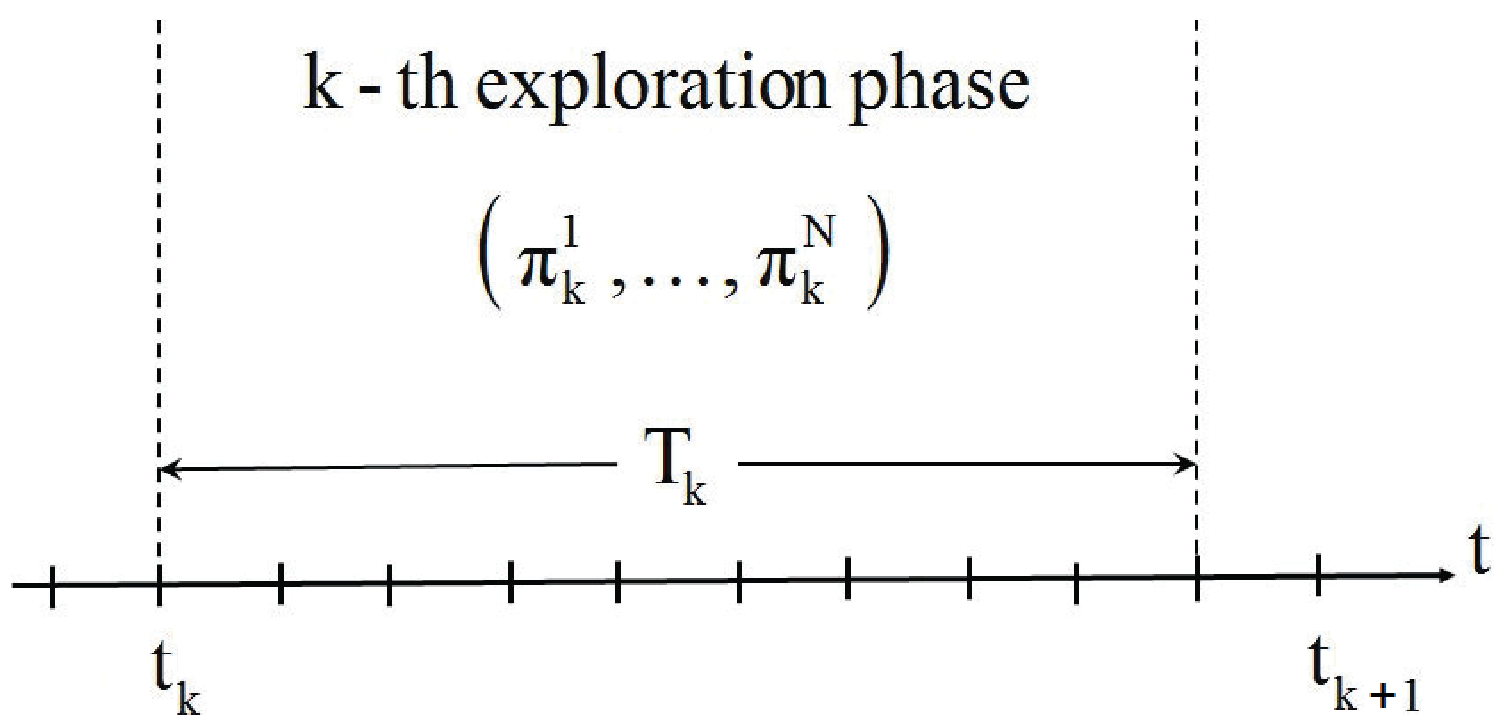,height=2.1cm,width=5cm}
\caption[]{An illustration of the $k-$th exploration phase.} \label{fig5}
\end{figure}

As illustrated in Figure~\ref{fig5}, the $k-$th exploration phase runs through times $t=t_k,\dots,t_{k+1}-1$, where $$t_{k+1}=t_k+T_k \qquad \mbox{(with $t_0=0$)}$$ for some integer $T_k\in[1,\infty)$ denoting the length of the $k-$th exploration phase. During the $k-$th exploration phase, DMs use some constant policies $\pi_k^1,\dots,\pi_k^N$ as their baseline policies with occasional experimentation.   

The essence of the main idea is to create a stationary environment over each exploration phase so that DMs can almost accurately learn their optimal Q-factors corresponding to the constant policies (which is also slightly randomized to make room for exploration) used during each exploration phase and update their policies.

%

This machinery has been adopted under two types of policy updates: (i) {\it Best response dynamics with inertia} for weakly acyclic games \cite{ArslanYukselTAC16} considered for the case where each agent has access to the global state but only local state (requiring typically deterministic policies), and (ii) a variation of it which is referred to as {\it satisficing paths} dynamics \cite{yongacoglu2022independent,yongacoglu2021satisficing} which assumes that the agents have access to a variety of information states and the policies may be randomized. 

Theorem \ref{main_thm}, with the following memoryless updates for each agent, ensures convergence under each exploration phase, under the required conditions. 
\begin{itemize}
\item[i][Global State] $S^m_t = {\bf X}_t, U_t = U^m_t, C_t = c(X^i_t,U^i_t,U^{-i}_t,X^{-i}_t)$ or $c(X^i_t,U^i_t,\mu^N_t)$ for the mean-field setup
\item[ii] [Local State] $S^m_t = X^m_t, U_t = U^m_t, C_t = c(X^i_t,U^i_t,U^{-i}_t,X^{-i}_t)$ or $c(X^i_t,U^i_t,\mu^N_t)$
\item[iii] [Local and Compressed Global/Mean-Field State] $S^m_t = \{X^m_t, F({\bf X}_t)\}, U_t = U^m_t, C_t = c(X^i_t,U^i_t,U^{-i}_t,X^{-i}_t)$. 
\end{itemize}

{\bf Subjective Satisficing Paths and Subjective Q-Learning Equilibrium}

Consider the following {\it subjective win-stay/lose-shift} algorithm: At the end of each exploration phase, if agents are $\epsilon$-satisfied, then they do not alter their policies. However, if they are not in an $\epsilon$-equilibrium, they randomly select a policy mapping their local perceived state to their actions, possibly with some inertia, where the policy space is quantized. In particular, the selected policies may be randomized (as they are not best responses or near best responses).

\begin{definition}\cite{yongacoglu2022independent,yongacoglu2021satisficing}
Let $\epsilon \geq 0$ and let $\pi^{-i} \in \Gamma_{S}$. A policy $\pi^i \in \Gamma^i_{S}$ is called a $(\mathcal{V}^{*}, \mathcal{W}^{*})$-subjective $\epsilon$-best-response to $\pi^{-i}$ if 
\[
V^{*i}_{\pi^i, \pi^{-i}} ( y ) \leq \min_{ a^i \in \uu} W^{*i}_{\pi^i, \pi^{-i}} ( y, a^i ) + \epsilon, \quad \forall y \in \yy. 
\]
\end{definition}


\begin{definition}\cite{yongacoglu2022independent,yongacoglu2021satisficing}
Let $\epsilon \geq 0$. A joint policy $\pi^{*} \in \Gamma_{S}$ is called a $(\mathcal{V}^{*}, \mathcal{W}^{*})$-subjective $\epsilon$-equilibrium if, for every $i \in \mathbb{N}$, we have 
\[
V^{*i}_{\pi^{*i}, \pi^{*-i}} ( y ) \leq \min_{ a^i \in \mathbb{U}} W^{*i}_{\pi^{*i}, \pi^{*-i}} ( y, a^i ) + \epsilon, \quad \forall y \in \yy. 
\]
\end{definition}

\cite{yongacoglu2021satisficing} introduced such a paradigm and presented conditions under which equilibrium or subjective equilibrium is arrived at. The limit in which each agent is $\epsilon$-satisfied with respect to the computed value functions, as a result of the Q-learning iterations is referred to as a {\it subjective (Q-learning) equilibrium}.

 Accordingly, each agent then applies (\ref{iterateAlgM}) during exploration phases. This is stated explicitly in the following \cite{yongacoglu2022independent}: 

\begin{algorithm}[H] 
	\SetAlgoLined
	\DontPrintSemicolon
	\SetKw{Receive}{Receive}
	\SetKw{Reset}{Reset}
	\SetKw{parameters}{Set Parameters}
	\SetKw{initialize}{Initialize}
	\SetKwBlock{For}{for}{end}
	
	\parameters \;
	\Indp
	$\Pi^i \subset \Gamma^i_{S}$ : a fine quantization of stationary policies $\Gamma^i_{S}: \mathbb{S} \to {\cal P}(\mathbb{U})$, where $s^i \in \mathbb{S}$, $u^i \in \mathbb{U}$ \; 
	
	$\{ T_k \}_{k \geq 0}$: a sequence in $\mathbb{N}$ of learning phase lengths \;
	\vspace{1pt}
	\Indp set $t_0 = 0$ and $t_{k+1} = t_k + T_k$ for all $k \geq 0.$ \;
	\vspace{1pt}
	\Indm $e^i \in (0,1)$: random policy updating probability \;
	$d^i\in(0,\infty)$: tolerance level for sub-optimality \;
	\Indm

	\BlankLine
	
	\initialize  $\pi_0^i \in \Pi^i$ (arbitrary), $\widehat{Q}_0^i = 0 \in \mathbb{R}^{\mathbb{S} \times \mathbb{U}}$, $\widehat{J}^i_0 = 0 \in \mathbb{R}^{\mathbb{U}}$  \\

	\For($k \geq 0$ ($k^{th}$ exploration phase{)}){ 
		\For( $t = t_k, t_k +1, \dots, t_{k+1} - 1$)  
		{
			Observe $s^i_t $ \;
			Select $u^i_t \sim 	\pi^i_k ( \cdot | s^i_t  )$  \;
			
			Observe $c^i_t := c(x^i_t,u^i_t,x^{-i}_{t},u^{-i}_t)$ and $s^i_{t+1} $ \;
			Set $n_t^i = \sum_{\tau = t_k}^{t} \textbf{1} \{ ( s^i_{\tau} , u^i_{\tau} ) = ( s^i_t, u^i_t ) \}$ \;
			Set $m_t^i =  \sum_{\tau = t_k}^{t} \textbf{1} \{  s^i_{\tau} = s^i_t  \}$ \;
			\vspace{3pt}
			$\widehat{Q}_{t+1}^i( s^i_t ,u_t^i)  =   \left(1- \frac{1}{ {n_t^i}} \right) \widehat{Q}_t^i(   s^i_t  ,u_t^i) + \frac{1}{{n_t^i}}  \big[ c^i_t +  \beta \min_{a^i} \widehat{Q}_t^i(  s^i_{t+1}  , a^i) \big]$  \;
			\vspace{3pt}
			$\widehat{J}^i_{t+1} (  s^i_t ) = \left(1 - \frac{1}{{m^i_t}} \right) \widehat{J}^i_t ( s^i_t  ) + \frac{1}{ {m^i_t} } \left[  c^i_t + \beta \widehat{J}^i_t ( s^i_{t+1} )	\right] $ \;
		}
		\vspace{6pt}

		\If { $\widehat{J}^i_{t_{k+1}} ( y ) \leq \min_{a^i} \widehat{Q}^i_{t_{k+1}} ( y , a^i )  + \epsilon + d^i$ $\forall y \in \mathbb{S}$,}{ $\pi^i_{k+1} = \pi^i_k$ }
		\Else( ){$\pi^i_{k+1} \sim (1- e^i) \delta_{\pi^i_k} + e^i \uniform ( \Pi^i) $ }
		
		\vspace{3pt}
		\Reset $ \widehat{J}^i_{t_{k+1}} = 0 \in \mathbb{R}^{\mathbb{S}}$ and $ \widehat{Q}^i_{t_{k+1}} = 0 \in \mathbb{R}^{\mathbb{S} \times \mathbb{U}}$ \; 
			}
	
	\caption{Independent Learning via $\epsilon$-Subjective Satisficing: Subjective Q-Learning \cite{yongacoglu2022independent}} \label{algo:main}
\end{algorithm}

Theorem \ref{main_thm} shows that the exploration phase in Algorithm \ref{algo:main} is such that the two-time scale and satisficing-paths paradigm is applicable to a much broader class of setups. 

Building on the general approach presented in \cite{yongacoglu2022independent}, it follows that under mild numerical parameter selection conditions, if (i) a subjective Q-learning $\epsilon$-equilibrium exists (with sufficiently fine quantization of the randomized stationary policy space) and (ii) if there is a finite $\epsilon$-subjective satisficing path from any initial policy profile to subjective Q-learning $\epsilon$-equilibrium equilibrium, Algorithm \ref{algo:main} will converge to a subjective equilibrium with arbitrarily high probability by adjusting the $T_k$ terms accordingly. 

Beyond the setups in which the limit may be close enough to each agent's objective equilibrium (case with global state, or mean-field state information) \cite{yongacoglu2022independent}, and symmetric games \cite{yongacoglu2021satisficing} conditions for the existence of subjective Q-learning equilibria is an open problem and requires further research. In particular, an application of Kakutani-Fan-Glicksberg theorem \cite[Corollary 17.55]{AlBo06} would entail a detailed study on the continuous dependence of the limit of Q-learning iterates in Theorem \ref{main_thm}.

We hope that Theorem \ref{main_thm} will provide further motivation for research in this direction.

\section{Conclusion}

In this paper, motivated by reinforcement learning in complex environments, we presented a convergence theorem for Q-learning iterates, under a general, possibly non-Markovian, stochastic environment. Our conditions for convergence were an ergodicity and a positivity condition. We furthermore provided a precise characterization on the limit of the iterates. We then considered the implications and applications of this theorem to a variety of non-Markovian setups (i) fully observed MDPs with continuous spaces and their quantized approximations (leading to near optimality), (ii) POMDPs with a weak Feller continuity together with a mild version of filter stability and quantization of filter realizations (which requires the knowledge of the model but more restrictive conditions on the initialization), (iii) POMDPs and the convergence to near-optimality under a uniform controlled filter stability plus finite window policies (which does not require the knowledge of the model and with an arbitrary initialization though under a more restrictive filter stability condition), and (iv) for multi-agent models where convergence of learning dynamics to a new class of equilibria, subjective Q-learning equilibria; where open questions on existence are noted. We highlighted that the satisfaction of ergodicity conditions required an analysis tailored to applications. 


\bibliographystyle{plain}
\bibliography{SerdarBibliography,AliBibliography}

\end{document}